\numberwithin{equation}{section}
\newtheorem{defn}[equation]{Definition}
\newtheorem{rem}[equation]{Remark}
\newtheorem{exm}[equation]{Example}
\newtheorem{lemma}[equation]{Lemma}
\newtheorem{notat}[equation]{Notation}
\newtheorem{newpar}[equation]{}
\newtheorem{xdefn}{Definition.}
\newtheorem{xproposition}{Proposition.}
\newtheorem{xcorollary}{Corollary.}
\newtheorem{xrem}{Remark.}
\newtheorem{xexm}{Example.}
\newtheorem{xlemma}{Lemma.}
\newtheorem{xtheorem}{Theorem.}
\newtheorem{xnotat}{Notation.}
\newtheorem{xnewpar}{\it}
\newtheorem{xproof}{{\it Proof. }}
\newtheorem{xproofof}{{\it Proof}}
\def\qed{\hspace{0.3cm}{\rule{1ex}{2ex}}}
\newenvironment{definition}{\begin{defn}\em}{\end{defn}}
\newenvironment{example}{\begin{exm}\em}{\end{exm}}
\newenvironment{proof}{\begin{xproof}\em}{\end{xproof}}
\newenvironment{newparagraph*}[1]{\begin{xnewpar}\hspace*{-1.5mm}{#1}. \rm}{\end{xnewpar}}
\newenvironment{definition*}{\begin{xdefn}\em}{\end{xdefn}}
\newenvironment{remark*}{\begin{xrem}\em}{\end{xrem}}
\newenvironment{example*}{\begin{xexm}\em}{\end{xexm}}
\newenvironment{notation*}{\begin{xnotat}\em}{\end{xnotat}}
\newenvironment{proposition*}{\begin{xproposition}}{\end{xproposition}}
\newenvironment{corollary*}{\begin{xcorollary}}{\end{xcorollary}}
\newenvironment{lemma*}{\begin{xlemma}}{\end{xlemma}}
\newenvironment{theorem*}{\begin{xtheorem}}{\end{xtheorem}}
\begin{document}
	
	\title{On the Soundness of Bealer's Logics}
	\author{Clarence Protin}
	
	\date{August 2021}
	
	\maketitle
	
	\begin{abstract} 
	
	Bealer's intensional logics T1 and T2 were proposed and expounded most fully in his book \emph{Quality and Concept} (1982) \cite{QC} as well in \cite{C}. These logics are unique in being  extensions of classical first-order	associated to a non-nominalist or non-inscriptionalist ontology and theory of meaning.  Structurally they are similar to the second-order systems proposed about the same time by Zalta \cite{zalta}. In the book and article referenced above Bealer presents a detailed sketch of a proof of soundness and completeness for T1 and T2 (something which seems to be lacking for Zalta's systems). However there are key steps to the soundness proofs which are stated without proof and which seem to be non-trivial.  In this paper we both simplify the original presentation of systems T1 and T2 and supply the rather complex and involved proofs of Bealer's missing lemmas.

	\end{abstract}


This paper only addresses technical details of Bealer's logic. For a recent survey of the intensional logic from a philosophical perspective which includes a discussion of the merits of Bealer's approach see \cite{parsons}.
The aims of the paper are (i) to give an alternative simplified presentation of Bealer's syntactic decomposition and (ii) to provide full details of the non-trivial lemmas required to the prove the soundness of T1 and T2.
	
The paper is organised as follows.
In section 1 we present a slightly different and hopefully simpler version of the common framework for T1 and T2. In sections 2 and 3 we present systems T1 and T2 together with same basic results. In section 4 we introduce the class of models we will be working with. In section 5 we present the missing proofs of all the lemmas required to prove the soundness of T1 and in section 6 we do the same for T2.

\section{Bealer Decomposition}

\begin{definition} The language $L^\omega$ consists of a countable ordered collection
	of variables $x,y,z,...$ and $n$-ary predicate symbols ($n\geq 1$) $F,G,H,...$
	with a distinguished binary predicate $=$, logical connectives $\&,\sim,\exists$ and the intensional abstraction
	operator $[\quad]_{x_1...x_n}$ where $x_1...x_n$ is a possibly empty sequence of distinct variables.
\end{definition}

We use the notation $\bar{x}$ for a (possibly empty) sequence of distinct variables. We use lower-case letters (possibly with subscripts) towards the end of the Latin alphabet to denote variables.

\begin{definition} Formulas and terms of $L^\omega$ are defined by simultaneous induction:
	
	\begin{itemize}
		\item Variables are terms.
		\item If $t_1,...,t_n$ are terms and $F$ is $n$-ary predicate, then $F(t_1,...,t_n)$ is a formula.
		\item If $A$ and $B$ are formulas, $v$ a variable, then $(A\& B), \neg A$ and $\exists v . A$ are formulas.
		\item If $A$ is a formula and $v_1,...,v_m$, $0\leq m$, is a sequence of \emph{distinct} variables, then $[A]_{v_1...v_m}$
		is a term.
		
	\end{itemize}	

\end{definition}
A term of the form $[A]_{v_1,...,v_n}$ is called an \emph{intensional abstract}. Intensional abstracts of the form $[F(v_1,...,v_n)]_{v_1...v_n}$ are called \emph{elementary}. Intensional abstracts $[A]_{\alpha}$ where $A$ is not an atomic formula are called \emph{complex}.

The intensional abstraction operator generalises the set-theoretic extension operator $\{ x: A\}$. The standard notions of bound and free variable,  bound-variable renaming ($\alpha$-equivalence) as well as a term being free for $x$ in $A$ carry over to $L^{\omega}$ in the expected way (note that intensional abstraction binds variables like quantifiers).  Terms that differ only by bound variable renaming will be called \emph{variants}.

We will define a set of (partially defined) syntactic operations on intensional abstracts and an algorithm to decompose an intensional abstract in a unique way in terms of such syntactic operations, elementary intensional abstracts and variables.

A term $[A]_{\bar{x}}$ is \emph{normalized} if all the variables in $\bar{x}$
	are free in $A$ and  they display the order in which these variables first occur free.

  If $A$ is atomic then we call $[A]_{\bar{x}}$ a	\emph{prime term} if the variables in $\bar{x}$ are free in $A$, independently of the order.

	 Given an atomic formula $F(t_1,...,t_n)$ if a variable occurs free in more than one of the terms $t_i$ then it is called
	a \emph{reflected variable}. A prime term $[A]_{\bar{x}}$ is called a \emph{prime reflection term} if $\bar{x}$ 
	contains a reflected variable in $A$.

	Consider a prime term $[F(t_1,..., [B]_{\bar{y}},...,t_j)]_{\bar{x}}$ which is not a prime reflection term.
	If there is a variable in $\bar{x}$ which is free in $[B]_{\bar{y}}$
	and all the previous arguments are variables in $\bar{x}$, 
	then the prime term is a \emph{prime relativized predication term} and such a variable
	is called a \emph{relativized variable}.

	 Consider a prime term $[F(t_1,...,t_k,...,t_j)]_{\bar{x}}$ which is not a prime reflection term.
	If there is term $t_k$ which is a variable not in $\bar{x}$ or
	which contains no free variables which are in $\bar{x}$ and all the previous
	arguments are variables in $\bar{x}$, then it is a \emph{prime absolute predication term}.

	These definitions allow us to divide all intensional abstracts into seven disjoint categories. The following is clear:
	
	\begin{lemma}
	
	Given an intensional abstract $[A]_{\bar{x}}$ it falls into one and one only of the 
	following seven categories which depend only on $A$ and $\bar{x}$ independently of order (only on the underlying set). They are also invariant under the renaming of bound variables.
	
	\begin{enumerate}
		\item Complex and $\bar{x}$ contains variables not free in $A$.
		\item Complex and all the variables in $\bar{x}$ are free in $A$. 
		\item Not complex and  $\bar{x}$ contains variables not free in $A$.
		\item Prime reflection term.
		\item Prime relativized predication term.
		\item Prime absolute predication term.
		\item $[A]_{\bar{x}'}$ is elementary for some permutation $\bar{x}'$ of $\bar{x}$.
	\end{enumerate}
	\end{lemma}

Before defining Bealer decomposition we need the following considerations on permutations. It is is clear that any permutation $\sigma^n \in S^n$($n\geq 2)$ can be decomposed into
permutations of the form $\sigma^n_c = (1\quad 2\quad ...\quad n)$(for $n\geq 3$) and $\sigma^n_i = (n-1\quad n)$. However, we need a uniquely defined decomposition defined as follows.
Assume that our permutations are acting on sequences $x_1...x_n$ of variables and let
$x'_1...x'_n$ be a permutation of $x_1...x_n$.  We use the notation $\sigma^n x_1....x_n$ to denote the results of apply $\sigma^n$ to the sequence. We associate to $x_1...x_n$ and $x'_1...x'_n$ (or equivalently the permutation which transforms the first sequence into the second) a uniquely defined decomposition as follows:

Suppose $\pi' = \pi_n...\pi_2\pi_1$ has already been defined, where $\pi_i$ is either $\sigma^n_c$ or $\sigma^n_i$.  Then let $s$ be the maximal subsegment of $\pi'x_1...x_n$ equal to an initial segment $b =x'_1...x'_k$ of $x'_1...x'_n$. If $b$ is the whole $x'_1...x'_n$ we are done. Otherwise $x'_{k+1}$ will correspond
to a certain $x$ in $\pi' x_1...x_n$. If $x$ is the last element of this sequence then we choose $\sigma^n_i$, otherwise we chose $\sigma^n_c$. This process terminates and the decomposition is clearly unique. Given a permutation $\sigma$ (or two sequences $\bar{x},\bar{x'}$ of the same length),  we denote its decomposition by $\beta(\sigma)$(or $\beta(\bar{x},\bar{x'}))$.

\begin{example} Consider $x_1x_2x_3x_4$ and $x_2 x_4 x_3 x_1$.
The the process yields $\sigma^4_i$	and $x_1x_2x_4x_3$, $\sigma^4_c\sigma^4_i$ and
$x_3x_1x_2x_4$,  $\sigma^4_c\sigma^4_c\sigma^4_i$ and
$x_4x_3x_1x_2$ and finally  $\sigma^4_c\sigma^4_c\sigma^4_c\sigma^4_i$ and $x_2x_4x_3x_1$.
\end{example}

\begin{definition}
The Bealer operations consists in eight syntactic operations defined on intensional abstracts. These consists in six unary operations $C,I, N,U,E, R$, a binary operation $K$ and partially-defined binary operations $P_n$ for $n\geq 0$. These operations can be seen as defined on equivalence classes modulo renaming of bound variables. 
Let $[A]_{\bar{x}}$ be an intensional abstract with $\bar{x}$ of length $n$. Then we have:

\[C[A]_{\bar{x}} = [A]_{\sigma^n_c\bar{x}} \quad n\geq 3 \]

\[I[A]_{\bar{x}} = [A]_{\sigma^n_i\bar{x}} \quad n\geq 2 \]

\[N[A]_{\bar{x}} = [\neg A]_{\bar{x}} \]

\[U[A]_{\bar{x}} = [\exists x_n A]_{x_1...x_{n-1}}\quad n\geq 1 \]

\[E[A]_{\bar{x}} = [A]_{\bar{x}y} \quad \text{ $y$ the first variable not in $A$ or $\bar{x}$}\]

\[R[A]_{\bar{x}} = [A[x_{n-1} / x_n]]_{x_1....x_{n-1}} \]

\[K[A]_{\bar{x}}[B]_{\bar{y}}  = [A\&B']_{\bar{x}} \quad \text{$[B']_{\bar{x}}$ variant of $[B]_{\bar{y}}$ } \]

\[P_0[A]_{\bar{x}}[B]_{\bar{y}}  = [A[ [B]_{\bar{y}}/x_n] ]_{x_1...x_{n-1}} \quad \text{where $n\geq 1$ and  $[B]_{\bar{y}}$  is free for $x_n$ in $[A]_{x_1...x_{n-1}}$} \]
	
\[P_n[A]_{\bar{x}}[B]_{\bar{y}} =  [A[ [B]_{\bar{z}}/x_n] ]_{x_1...x_{n-1}\bar{w} }\quad
\text{where $\bar{y} = \bar{z}\bar{w}$,$\bar{w}$ length $n\geq 1$,$[B]_{\bar{z}}$  is free for $x_n$ in $[A]_{x_1...x_{n-1}}$ }  \]	

\end{definition}

Consider a sequence $\bar{x}$ and a permutation $\bar{x'}$. We associate
to $\beta(\bar{x},\bar{x'})$ a sequence $\Sigma(\bar{x},\bar{x'})$ of operations $I$ and $C$ obtained by replacing $\sigma^n_i$ with $I$ and $\sigma^n_c$ with $C$ in $\beta(\bar{x},\bar{x'})$. Thus in the previous example we have $\Sigma(x_1x_2x_3x_4,x_2 x_4 x_3 x_1) = CCCI$. This sequence is to be interpreted compositionally. In general we shall use the notation $\Sigma$ for any  (possibly empty) sequence of operations $C$ and $I$.
We use the notation $E^n$ to denote $E$ composed $n$ times. Also we shall use the notation $P^n_0tt_1...t_n$ to mean $P_0... P_0(P_0tt_1)t_2....t_n$.

	We now define Bealer decomposition.
	
	\begin{definition}
Given an intensional abstract $[A]_{\bar{x}}$ or variable $x$ the Bealer decomposition $\mathcal{B} [A]_{\bar{x}}$	is defined inductively as follows:

\begin{itemize}
	\item Suppose $[A]_{\bar{x}}$ belongs to categories 1 or 3. And let $\bar{x}'$ 
	be a permutation of $\bar{x}$ of the form $\bar{y}\bar{z}$ so that  $[A]_{\bar{y}}$ is normalized and $\bar{z}$ (of length $m$)
	consists of the variables in $\bar{x}$ that do not occur free in $A$ in order of their occurrence in 
	$\bar{x}$. Then  $\mathcal{B}[A]_{\bar{x}}:= \Sigma(\bar{x},\bar{x'})E^m\mathcal{B}[A]_{\bar{y}}$.

	\item Suppose  $[A]_{\bar{x}}$ belongs to category 2 and is not normalized. And let $\bar{x}'$ be a permutation of $\bar{x}$ 
	so that  $[A]_{\bar{x}'}$ is normalized. Then $\mathcal{B}[A]_{\bar{x}}:=\Sigma(\bar{x},\bar{x'})\mathcal{B}[A]_{\bar{x}'}$.

	\item Suppose the term belongs to category 2 and is normalized. Then we have

	$\mathcal{B}[A \& B]_{\bar{x}}:= K\mathcal{B}[A]_{\bar{x}} \mathcal{B}[B]_{\bar{x}}$
	
	$\mathcal{B}[\neg A]_{\bar{x}}:= N\mathcal{B}[A]_{\bar{x}}$
	
	$\mathcal{B}[\exists v A]_{\bar{x}}:= U\mathcal{B}[A]_{\bar{x} v}$  where $v$ is the first variable not in $A$ or $\bar{x}$.

	\item Let the term belong to category 4. Of the reflected variables in $\bar{x}$ consider the one $v$ with
	the right-most occurrence and let be $t$ the right-most argument of $A$ in which $v$ has a free occurrence. Let $w$ be the
	alphabetically earliest variable not occurring in $A$ or $\bar{x}$.  Let $\bar{x}'$ be the permutation of
	$\bar{x}$ such that $[F(t_1,...,t_n)]_{\bar{x}'}$ is normalized and consider $\bar{y} v$ where $\bar{y}$ is $\bar{x}'$ with $v$ removed.
	Then 
	\[\mathcal{B}[F(t_1,...,t_n)]_{\bar{x}}:=\Sigma(\bar{x},\bar{y} v)R\mathcal{B}[F(...t[w/v]...)]_{\bar{y} v w}\]
	
	\item Let the term belong to category 6 and let $t$ be the argument which is not a variable in $\bar{x}$ or
	which contains no free variables which are in $\bar{x}$ and such that all the previous
	arguments are variables in $\bar{x}$. Let $v$ be the alphabetically earliest variable
	not occurring in $A$ or $\bar{x}$.. Let $\bar{x}'$ be the permutation of $\bar{x}$ such that $[F(t_1,...,t,...,t_n)]_{\bar{x}'}$ 
	is normalized. Then
	\[\mathcal{B}[F(...,t,...)]_{\bar{x}}:=\Sigma(\bar{x},\bar{x'})P_0\mathcal{B}[F(...,v,...)]_{\bar{x}' v} \mathcal{B}t\]

	\item Let the term belong to category 5. Then it is  a prime term $[F(t_1,..., [B]_{\bar{y}},...,t_j)]_{\bar{x}}$, 
	where there is a variable in $\bar{x}$ which is free in $[B]_{\bar{y}}$
	and such that all the previous arguments are variables in $\bar{x}$. 
	Let $\bar{z}$ be the sequence, as they first occur, of all $m$ free variables in  $[B]_{\bar{y}}$ which are in $\bar{x}$.
	Let $v$ be the alphabetically earliest variable not occurring in $A$ or $\bar{x}$.
	Then if $\bar{x}'$ is the normalizing permutation of $\bar{x}$ it is of the form $\bar{x}''\bar{z}\bar{x}'''$. Let $\bar{w} = \bar{x}''\bar{x}'''\bar{z}$. Then
	\[\mathcal{B}[F(...,[B]_{\bar{y}},...)]_{\bar{x}}:=\Sigma(\bar{x}, \bar{w})P_m\mathcal{B}[F(...,v,...)]_{\bar{x}''\bar{x}'''v}, \mathcal{B}[B]_{\bar{y}\bar{z}}\]
	
	\item Finally let the term be in category 7 and let $\bar{x}'$ be such that $[F(x_1,...,x_n)]_{\bar{x}'}$ is elementary.
	\[\mathcal{B}[F(x_1,...,x_n)]_{\bar{x}}:=\Sigma(\bar{x},\bar{x}')[F(x_1,...,x_n)]_{\bar{x}'}\]
	
	\item In the case of a variable we have $\mathcal{B}x := x$.
\end{itemize}
	\end{definition}
It is easy to check that this decomposition is well-defined and unique modulo renaming of bound variables. Bealer decomposition transforms an intensional abstract into an expression (in Polish notation) involving the eight Bealer operations, elementary abstracts and variables.

\begin{example}
	Consider the term $[\neg F(x, [G(x,y)]) ]_y$. Then 
	
	\[\mathcal{B}[\neg F(x, [G(x,y)]) ]_y= 
	N\mathcal{B}[F(x, [G(x,y)]) ]_y = NP_0\mathcal{B}[F(v, [G(x,y)]  )]_{yv} x\]
	\[= NP_0I\mathcal{B}[F(v, [G(x,y)]  )]_{vy} x = NP_0IP_1\mathcal{B}[F(v, w  )]_{vw}\mathcal{B}[G(x,y)]_y  x \] 
	\[=NP_0IP_1\mathcal{B}[F(v, w  )]_{vw}P_0\mathcal{B}[G(u,y)]_{yu} x  x= NP_0IP_1\mathcal{B}[F(v, w  )]_{vw}P_0I\mathcal{B}[G(u,y)]_{uy} x  x\]
\end{example}

The following observation will be used further ahead:

\begin{lemma} Let $[F(t_1,...,t_n)]_{\bar{x}}$ be  a prime term. Then its Bealer decomposition has the form
	
	\[ \Gamma_1...\Gamma_m[F(v_1,...,v_n)]_{v_1...v_n}\mathcal{B}t'_{i_1}....\mathcal{B}t'_{i_m}\]
	
	where the $\Gamma_i$ consist in sequences of the form
	
	\[\Sigma_1R\Sigma_2R\Sigma_3...\Sigma_{l-1}R\Sigma_lP_{n_i} \]
	
	and $t'_{i_k} = s_{\bar{y}}$ where  $s = t_{i_k}$ and if $s = [A]_{\bar{z}}$ then
	$s_{\bar{y}}$ is $[A]_{\bar{z}\bar{y}}$ where $\bar{y}$ is the sequence of free variables
		(as they first occur) of $s$ which are in $\bar{x}$. If $s$ is a variable then $\bar{y}$ is empty and $t'_{i_k} = s$.
		 
	\end{lemma}

\section{System T1}

We now consider Bealer's axiomatic system T1 over the language $L^{\omega}$.
The conception of intensionality behind T1 is that there is only one necessary truth
and that the intensional abstracts of necessarily equivalent formulas are equal
and interchangeable.  We define necessitation in terms of equality of intensional abstracts:
 \[\square A \equiv_{df} [A] = [[A] = [A]]\].
 
 Here $[[A] = [A]]$ represents the one necessary truth.
 
A Hilbert system for T1 consists of the following axioms and rules:
\begin{enumerate}
	\item All propositional tautologies 
	\item (Inst) $\forall v. A(v) \rightarrow A(t)$, $t$ free for $v$ in A.
	\item (QImp) $\forall x. (A\rightarrow B) \rightarrow (A \rightarrow \forall x. B)$, $x$ not free in $A$
	\item (Id) $x = x$
	\item (L) $x = y \rightarrow (A(x,x) \leftrightarrow A(x,y))$,  $y$ free for $x$ in the positions it replaces $x$.
	\item $[A]_{\bar{x}} \neq [B]_{\bar{y}}$, for $\bar{x}$ and $\bar{y}$ of different lengths
	\item $[A]_{\bar{x}} = [A']_{\bar{x}'}$ (equality modulo renaming bound variables)
	\item (B) $[A]_{\bar{x}} = [B]_{\bar{x}} \leftrightarrow \square \forall \bar{x}. (A \leftrightarrow B)$
	\item (T) $\square A \rightarrow A$
	\item (K) $\square (A \rightarrow B) \rightarrow (\square A \rightarrow \square B)$
	\item (S5) $\lozenge A \rightarrow \square \lozenge A$
	
\end{enumerate}
\begin{enumerate}
	
	\item (MP) From $A$ and $A\rightarrow B$ infer $B$ 
	
	\item (N) From $A$ infer $\square A$ 
	
	\item (Gen) From $A$ infer $\forall x. A$ 
	
\end{enumerate}

Here, as usual, $\lozenge A =_{df} \neg \square \neg A$. 

In order to prove soundness and completeness it is convenient to replace
axioms (L),(B)  and (S5) with:

\begin{enumerate}

\item (L') $x = y \rightarrow (A(x,x) \rightarrow A(x,y))$,  $y$ free for $x$ in the positions it replaces $x$, $A$ atomic.
\item (B1) $\square(A\leftrightarrow B)\leftrightarrow [A]=[B]$
\item (B2) $\forall v.[A(v)]_{\bar{x}} = [B(v)]_{\bar{x}}\leftrightarrow [A(x)]_{\bar{x}v} = [B(v)]_{\bar{x}v}$

\item (S5') $ x \neq y \rightarrow \square  x \neq y$
\end{enumerate}

The rest of this section is devoted to proving that we obtain equivalent axiomatic systems.

\begin{lemma}
	In T1 we have:
	
	i) $\vdash x = y \rightarrow \square x = y $
	
	ii) $\vdash \neg\square x = y \leftrightarrow  x \neq y$
	
\end{lemma}

\begin{proof}
	For i), by the identity axiom we have $x=x$. Hence using the necessitation rule we get
	
	\[ \square x = x \]
	
	which is
	
	\[[x = x] = [ [x = x] = [x = x] ]  \]
	
 We denote this expression by $B(x,x)$. Using $B(x,x)$ in (L)
	
	\[ x = y \rightarrow A(x,x) \leftrightarrow A(x,y) \]
	
	which is equivalent to $A(x,x) \rightarrow ( x = y \rightarrow A(x,y))$ we obtain as desired.\\
	
	For ii), take the converse of the (T) instance $\square x = y \rightarrow x = y$ 
	to obtain $x \neq y \rightarrow \neg \square x = y$.
	For the other direction simply take the converse of i)
	\qed
\end{proof}
\begin{lemma}
	
	We obtain an equivalent system if in T1 we replace (S5) with
	
	(*) $ x \neq y \rightarrow \square  x \neq y$.
	
\end{lemma}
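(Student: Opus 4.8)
The plan is to exploit the fact that the two systems share every axiom and rule except that the original $T1$ carries (S5) $\lozenge A \rightarrow \square \lozenge A$ while the modified system carries (*) $\sim x = y \rightarrow \square \sim x = y$. Equivalence of the two deductive systems therefore reduces to two inter-derivability claims: that (*) is a theorem of $T1$, and that (S5) is a theorem of the system obtained by replacing (S5) with (*). Once both are in hand, any derivation in one system can be transcribed into the other by substituting, for each use of the non-shared axiom, the derivation just produced.

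For the direction $T1 \vdash (*)$ I would instantiate the S5 schema at $A := \sim x = y$, yielding $\lozenge \sim x = y \rightarrow \square \lozenge \sim x = y$. Since $\lozenge \sim x = y$ abbreviates $\sim \square x = y$, the already-established fact $\vdash \sim \square x = y \leftrightarrow \sim x = y$ gives $\vdash \lozenge \sim x = y \leftrightarrow \sim x = y$; applying (N) and then (K)-congruence to this theorem yields $\vdash \square \lozenge \sim x = y \leftrightarrow \square \sim x = y$. Rewriting the antecedent and consequent of the S5 instance by these two equivalences produces exactly (*). This direction uses only (T), (K), and the prior lemma, none of which depends on S5.

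The second direction carries the real content, and it turns on the definition $\square A \equiv_{df} [A] = [[A]=[A]]$. I would first apply (Gen) twice to (*), obtaining $\forall x\,\forall y(\sim x = y \rightarrow \square \sim x = y)$, and then instantiate by (Ins) at the compound terms $s := [\sim A]$ and $t := [[\sim A] = [\sim A]]$. The key observation is that the formula $s = t$ is, symbol for symbol, the definitional unfolding of $\square \sim A$; hence the instantiated antecedent $\sim s = t$ is $\sim \square \sim A$, that is $\lozenge A$, and the instantiated consequent $\square \sim s = t$ is $\square \lozenge A$. The resulting instance reads $\lozenge A \rightarrow \square \lozenge A$, which is precisely (S5) for the arbitrary formula $A$; combined with the common (T) and (K) this recovers full S5.

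The main obstacle is not the modal manipulation but the legitimacy of the substitution: I must verify that $s$ and $t$ are free for $x$ and $y$ in (*) so that (Ins) genuinely applies. Here the syntactic shape of the unfolded formula is decisive — the propositional abstracts $[\sim x = y]$ and $[[\sim x = y]=[\sim x = y]]$ appearing after expanding $\square$ are $0$-ary and bind no variables, so substituting the compound terms (which may themselves carry the free variables of $A$) captures no occurrence of $x$ or $y$. Granting this freeness check, both derivations go through and together they establish the claimed equivalence of the two formulations of $T1$.
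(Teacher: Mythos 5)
Your proposal is correct and follows essentially the same route as the paper: the direction from (*) to (S5) is the definitional unfolding of $\square\sim A$ as an identity between the abstracts $[\sim A]$ and $[[\sim A]=[\sim A]]$ (the paper's ``we get S5 immediately''), and the converse direction instantiates (S5) at $A:=\;\sim x=y$ and uses the previously proved $\vdash\;\sim\square\, x=y\leftrightarrow\;\sim x=y$ together with (N) and (K), exactly as in the paper. Your added freeness check for the instantiation of (*) at the compound abstracts is a welcome explicit detail that the paper leaves tacit.
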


\begin{proof}
	
	Assuming (*), since $\lozenge A$ is by definition $\neg ([\neg  A] = [ [\neg  A] = [\neg  A]])$ we get (S5) immediately.
	
	In the other direction take the (S5) instance
	
	\[ \neg \square \neg \neg x = y \rightarrow \square \neg \square \neg \neg x = y \]
	
	that is,
	
	\[ \neg\square x = y \rightarrow \square \neg \square x = y \tag{**} \]
	
	Now by ii) of the previous lemma we have
	
	\[\neg \square x = y \rightarrow \neg x = y \]
	
	Applying necessitation and using (K) we get
	
	\[\square \neg \square x = y \rightarrow \square \neg x = y \]
	
	and combining with (**) we get as desired.
	\qed
\end{proof}

We will use the standard properties of S5 modal logic:

\begin{lemma} In T1 we have
	
	i) $\vdash A\rightarrow\lozenge A$
	
	ii)  $\vdash A \rightarrow \square \lozenge A$ 
	
	iii) $\lozenge \square A \rightarrow A$
	
	iv) If $\vdash A\rightarrow B$ then we can derive $\vdash \lozenge A \rightarrow \lozenge B$
	
	v) We have $\vdash \lozenge A \rightarrow B$ iff  $\vdash A \rightarrow \square B$ 
	
\end{lemma}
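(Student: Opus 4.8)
The plan is to derive all five items from the propositional base together with the three modal principles (T), (K), (S5) and the rules (N), (MP), using the definition $\lozenge A =_{df} \neg\square\neg A$ throughout. There is no deep content here: the items are the standard S5 facts, and the work consists entirely of bookkeeping, since each propositional inference is an instance of a tautology discharged by (MP) and every passage between $\square$ and $\lozenge$ must be unfolded through the definition. I would establish the items in the order i), ii), iv), iii), v), because each later item reuses earlier ones.

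First I would obtain i) as the contrapositive of the (T)-instance $\square\neg A \rightarrow \neg A$: this tautologically yields $A \rightarrow \neg\square\neg A$, which is exactly $A\rightarrow\lozenge A$. Item ii) is then immediate by chaining i) with the axiom (S5) $\lozenge A \rightarrow \square\lozenge A$ through the transitivity tautology. For iv), from the hypothesis $\vdash A\rightarrow B$ I would pass to the contrapositive $\vdash \neg B\rightarrow\neg A$, apply (N) to get $\vdash\square(\neg B\rightarrow\neg A)$, then (K) and (MP) to get $\vdash\square\neg B\rightarrow\square\neg A$, and finally contrapose to reach $\vdash\neg\square\neg A\rightarrow\neg\square\neg B$, i.e. $\vdash\lozenge A\rightarrow\lozenge B$.

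Item iii) I would obtain as the dual of ii): instantiating ii) at $\neg A$ gives $\vdash\neg A\rightarrow\square\lozenge\neg A$, and unfolding $\lozenge\neg A$ yields $\vdash\neg A\rightarrow\square\neg\square\neg\neg A$. Erasing the inner double negation and contraposing should then produce $\vdash\lozenge\square A\rightarrow A$. The one point demanding care is that $\neg\neg A$ sits inside two nested $\square$/$\neg$ contexts, so replacing it by $A$ is not a propositional step; I would first derive the congruence rule ``if $\vdash X\leftrightarrow Y$ then $\vdash\square X\leftrightarrow\square Y$'' from (N) and (K), apply it to the tautology $\neg\neg A\leftrightarrow A$, and only then contrapose. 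Finally, for the adjunction v): in the forward direction, from $\vdash\lozenge A\rightarrow B$ apply (N), (K), (MP) to get $\vdash\square\lozenge A\rightarrow\square B$ and prepend ii) to conclude $\vdash A\rightarrow\square B$; in the backward direction, from $\vdash A\rightarrow\square B$ apply iv) to get $\vdash\lozenge A\rightarrow\lozenge\square B$ and append iii) to conclude $\vdash\lozenge A\rightarrow B$.

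The main obstacle is not conceptual but the notational discipline forced by working in a Hilbert system with $\lozenge$ defined rather than primitive: every passage between $\square$ and $\lozenge$, and every contraposition, must be discharged as an explicit tautology plus (MP), and the one genuinely non-propositional move—the double-negation simplification inside nested modalities in iii)—requires the congruence rule derived from (N) and (K). I therefore expect iii) to be the delicate step, and would write out its unfolded forms $\square\neg\square\neg\neg A$ and $\square\neg\square A$ in full, justifying their interprovability by congruence before performing the final contraposition.
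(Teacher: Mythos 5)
Your proof is correct and follows essentially the same route as the paper: i) is the contrapositive of the (T)-instance $\square\sim A\rightarrow\sim A$, ii) chains i) with (S5), iv) is contraposition through (N) and (K), iii) is obtained by contraposing through the S5/T chain, and v) is assembled from ii), iii), iv) in the standard way. The only difference is one of care, not of method: you make explicit the congruence step (derived from (N) and (K)) needed to eliminate the double negation sitting under nested modalities in iii), which the paper's one-line argument leaves implicit.
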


\begin{proof}
	For i) take the converse of the (T) instance $\square \neg A\rightarrow \neg A$.
	For ii) combine i) and (S5).
	For iii) take the converse of the (S5) instance $\neg \square \lozenge \neg A \rightarrow \neg \lozenge \neg A$ and use (T).
	For iv) take the converse of the premise, apply (T) and take the converse again.
	Finally v) follows easily from (T), iv), iii) and ii).
	\qed
\end{proof}

\begin{lemma}
	$\vdash \square \forall v. A(v) \rightarrow \forall v .\square A(v)$
\end{lemma}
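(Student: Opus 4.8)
The plan is to reduce the claim to the ordinary normal-modal machinery already available, since this is just the converse Barcan direction, which holds in every normal modal logic and requires none of the S5 strength. First I would instantiate the axiom (Ins) with $t = v$ to obtain the provable implication $\forall v. A(v) \rightarrow A(v)$, which is the natural seed for everything that follows.

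Next I would apply the necessitation rule (N) to this implication, yielding $\square(\forall v. A(v) \rightarrow A(v))$, and then feed it into the (K) instance $\square(\forall v. A(v) \rightarrow A(v)) \rightarrow (\square \forall v. A(v) \rightarrow \square A(v))$. A single application of (MP) then gives $\square \forall v. A(v) \rightarrow \square A(v)$.

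To move the universal quantifier inside the box I would apply (Gen) to obtain $\forall v.(\square \forall v. A(v) \rightarrow \square A(v))$ and match this against the (QImp) instance whose antecedent is $\square \forall v. A(v)$ and whose consequent is $\square A(v)$. A final (MP) then delivers $\square \forall v. A(v) \rightarrow \forall v. \square A(v)$, as desired.

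The only point requiring care --- and the step I expect to be the genuine obstacle rather than a mere formality --- is verifying the side condition of (QImp), namely that $v$ is not free in the antecedent $\square \forall v. A(v)$. Since $\square$ is not primitive but abbreviates $[\forall v. A(v)] = [[\forall v. A(v)] = [\forall v. A(v)]]$, one must unfold this definition and confirm that every occurrence of $v$ remains captured by the inner quantifier $\forall v$ inside the intensional abstracts, so that no occurrence of $v$ escapes as free through the abstraction brackets. Once this is checked the side condition is satisfied and the derivation goes through unchanged.
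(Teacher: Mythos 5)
Your proposal follows exactly the paper's derivation: the (Ins) instance $\forall v.\,A(v) \rightarrow A(v)$, then (N), (K), (MP) to get $\square \forall v.\,A(v) \rightarrow \square A(v)$, then (Gen), (QImp), (MP) to distribute the quantifier. Your additional check that $v$ stays bound inside the intensional abstracts when $\square$ is unfolded is a worthwhile point of care that the paper passes over silently, but the argument is the same.
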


\begin{proof}
	
	Take the (Ins) instance
	
	\[ \forall v. A(v) \rightarrow A(v) \]
	
	Applying necessitation, (K) and modus ponens yields:
	
	\[ \square \forall v. A(v) \rightarrow \square A(v) \]
	
	Applying generalization:
	
	\[ \forall v. (\square \forall v. A(v) \rightarrow \square A(v)) \]
	
	Since $v$ is not free on the left side of the implication we can apply
	(QImp) and modus ponens to obtain
	
	\[ \square \forall v. A(v) \rightarrow \forall v. \square A(v) \]
	
	\qed
\end{proof}

\begin{lemma} We have the \emph{Barcan Formula} $\vdash \forall v.\square A(v) \rightarrow \square \forall v. A(v)$.
\end{lemma}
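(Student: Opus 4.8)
The plan is to reduce the Barcan formula to the adjunction relation established in part (v) of the preceding ``adjunction/functorial'' lemma, namely that $\vdash \lozenge P \rightarrow Q$ holds iff $\vdash P \rightarrow \square Q$. Reading this equivalence from right to left with $P = \forall v.\square A(v)$ and $Q = \forall v. A(v)$, it suffices to establish
\[ \vdash \lozenge \forall v.\square A(v) \rightarrow \forall v. A(v). \]
Thus the whole problem collapses to proving this one ``diamond'' implication, after which a single appeal to the adjunction lemma yields the desired formula.

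To obtain that implication I would start from the instantiation axiom $\forall v.\square A(v) \rightarrow \square A(v)$, which is just (Ins) with $t = v$. Applying $\lozenge$-monotonicity (part (iv) of the adjunction lemma) gives $\lozenge \forall v.\square A(v) \rightarrow \lozenge \square A(v)$. Chaining this with the S5 collapse $\lozenge \square A(v) \rightarrow A(v)$ (part (iii) of the same lemma), via transitivity of implication, produces
\[ \lozenge \forall v.\square A(v) \rightarrow A(v). \]

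The final move is to reintroduce the quantifier. Since $v$ occurs bound in $\lozenge \forall v.\square A(v)$, it is not free in the antecedent, so I would apply (Gen) to obtain $\forall v.(\lozenge \forall v.\square A(v) \rightarrow A(v))$ and then use (QImp) together with (MP) to drive the quantifier inward, yielding $\lozenge \forall v.\square A(v) \rightarrow \forall v. A(v)$. One application of part (v) of the adjunction lemma then delivers the Barcan formula $\vdash \forall v.\square A(v) \rightarrow \square \forall v. A(v)$.

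The derivation is short and every step is furnished by an already-proven lemma or by a basic axiom; the only point demanding care is the free-variable bookkeeping in the penultimate step, where one must verify that $v$ is genuinely not free in $\lozenge \forall v.\square A(v)$ so that (QImp) legitimately applies. I expect this to be essentially the only obstacle, and it is resolved immediately by noting that the occurrence of $v$ is captured by the inner universal quantifier.
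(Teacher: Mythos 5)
Your proof is correct and follows exactly the paper's own derivation: instantiate (Ins), apply $\lozenge$-monotonicity and the collapse $\lozenge\square A \rightarrow A$, reintroduce the quantifier with (Gen), (QImp) and (MP), and finish with the adjunction $\vdash \lozenge P \rightarrow Q$ iff $\vdash P \rightarrow \square Q$. No differences worth noting.
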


\begin{proof}
	
	We have by (Ins)
	
	\[\forall v. \square A \rightarrow \square A \]
	
	Applying $\lozenge$ yields, by lemma 2.3 iii) and iv):
	
	\[\lozenge \forall v. \square A \rightarrow \lozenge \square A \rightarrow A \]
	
	Hence by generalization, (QImp) and modus ponens:
	
	\[\lozenge \forall v. \square A \rightarrow \forall v. A \]
	
	which again by lemma 2.3 yields
	
	\[\forall v .\square A \rightarrow \square \forall v. A \]
	
	\qed
\end{proof}

We can now prove:

\begin{lemma}
	We obtain an equivalent system  to T1 if instead of (B) take

	\[\square (A \leftrightarrow B ) \leftrightarrow ([A] = [B])\tag{B'1}\]

	\[\forall v. ( [A(v)]_{\bar{x}} =   [B(v)]_{\bar{x}}) \leftrightarrow  [A(v)]_{\bar{x} v} =  [B(v)]_{\bar{x} v}\tag{B'2}\]
	
\end{lemma}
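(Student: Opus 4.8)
The plan is to prove that the two systems have the same theorems by showing that the single axiom (B) is interderivable, over the axioms and rules common to both systems, with the pair (B'1), (B'2). Every other axiom and rule is shared; in particular the Barcan Formula (Lemma 1.24) and its converse (Lemma 1.23) were obtained using only (T), (K), (S5), (N), (Gen), (Ins) and (QImp), none of which mentions (B), so these commutation principles between $\forall v$ and $\square$ remain available in both systems. I will also use the derived modal congruence rule — from $\vdash P\leftrightarrow Q$ infer $\vdash\square P\leftrightarrow\square Q$, via (N) and (K) — and the first-order fact that from $\vdash P\leftrightarrow Q$ one infers $\vdash\forall v.P\leftrightarrow\forall v.Q$.

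First I would derive (B'1) and (B'2) inside T1. Axiom (B'1) is simply the instance of (B) in which the abstraction sequence $\bar x$ is empty: then $[A]_{\bar x}=[A]$ and $\forall\bar x.(A\leftrightarrow B)=(A\leftrightarrow B)$, so (B) reads $[A]=[B]\leftrightarrow\square(A\leftrightarrow B)$, which is (B'1) up to the symmetry of $\leftrightarrow$. For (B'2), apply (B) to the equation on each side of the biconditional: under the quantifier $\forall v$ the equation $[A(v)]_{\bar x}=[B(v)]_{\bar x}$ becomes $\square\forall\bar x.(A(v)\leftrightarrow B(v))$, and the right-hand equation $[A(v)]_{\bar x v}=[B(v)]_{\bar x v}$ becomes $\square\forall\bar x v.(A(v)\leftrightarrow B(v))$. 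Writing $C$ for $A(v)\leftrightarrow B(v)$, it then remains to establish
\[\forall v.\,\square\forall\bar x.\,C\ \leftrightarrow\ \square\forall\bar x v.\,C.\]
Applying the Barcan Formula and its converse to move $\forall v$ across the box yields $\forall v.\square\forall\bar x.C\leftrightarrow\square\forall v.\forall\bar x.C$, and commutativity of universal quantifiers (a first-order theorem) followed by modal congruence yields $\square\forall v.\forall\bar x.C\leftrightarrow\square\forall\bar x.\forall v.C$, the latter being $\square\forall\bar x v.C$. Chaining these equivalences gives (B'2).

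Conversely I would derive (B) from (B'1) and (B'2) by induction on the length of $\bar x$. The base case, $\bar x$ empty, is exactly (B'1). For the step, write $\bar x=\bar y v$ with $\bar y$ one shorter. By (B'2) with abstraction sequence $\bar y$ we have $[A]_{\bar y v}=[B]_{\bar y v}\leftrightarrow\forall v.([A]_{\bar y}=[B]_{\bar y})$; by the induction hypothesis $[A]_{\bar y}=[B]_{\bar y}\leftrightarrow\square\forall\bar y.(A\leftrightarrow B)$, whence $\forall v.([A]_{\bar y}=[B]_{\bar y})\leftrightarrow\forall v.\square\forall\bar y.(A\leftrightarrow B)$. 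The same Barcan-plus-commutativity argument as above rewrites the right-hand side as $\square\forall\bar y v.(A\leftrightarrow B)$, completing the step; $\alpha$-equivalence is invoked wherever the normalized order of the abstracted variables differs from $\bar y v$.

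Since the Barcan equivalences are already in hand, the proof is in essence careful bookkeeping and I expect no single deep obstacle. The point demanding most care is the quantifier-order management: pushing $\forall v$ through the box and then past the block $\forall\bar y$, and identifying $\forall\bar y v$ with $\forall\bar y.\forall v$, each of which must be justified explicitly by the Barcan pair, first-order quantifier commutation, and $\alpha$-equivalence rather than assumed. One should also verify at the outset, as indicated, that Lemmas 1.23 and 1.24 do not secretly depend on (B), so that they genuinely remain usable in the system from which (B) has been deleted.
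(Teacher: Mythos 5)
Your proposal is correct and follows essentially the same route as the paper: peel off the abstracted variables one at a time with (B'2), apply (B'1) under the resulting block of quantifiers, and commute the quantifiers with $\square$ via the Barcan Formula and its converse. You merely make explicit what the paper leaves implicit — casting the repeated use of (B'2) as an induction on the length of $\bar{x}$, spelling out the converse direction that the paper dismisses as ``similar,'' and checking that the Barcan lemmas do not depend on (B) — all of which is sound bookkeeping rather than a different argument.
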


\begin{proof}
	
	Using Barcan's formula and its converse it is easy to see that
	(B'1) and (B'2) follow from (B).
	
	Assume (B'1) and (B'2). We must show that 
	
	\[  [A]_{\bar{x}} = [B]_{\bar{x}} \leftrightarrow \square \forall \bar{x}. (A \leftrightarrow B) \]
	
	Let $\bar{x} = \bar{x'}v$.
	
	Since $[A]_{\bar{x'} v}  = [B]_{\bar{x'} v}  \leftrightarrow \forall v. ([A(v)]_{\bar{x'}} = [B(v)]_{\bar{x'}})$
	using (B'2) repeatedly we get
	
	\[[A]_{\bar{x} v}  = [B]_{\bar{x} v}  \leftrightarrow \forall \bar{x}, v. ([A(v)] = [B(v)])\]
	
	Using (B'1) we get
	\[  [A]_{\bar{x}} = [B]_{\bar{x}} \leftrightarrow \forall \bar{x}. \square (A \leftrightarrow B) \]

	Using repeatedly Barcan's formula and its converse we get as desired.
	
	\qed
	
\end{proof}

Finally:

\begin{lemma}  We obtain an equivalent system to T1  when the Leibniz axiom (L)  is restricted to atomic predicates and restricted to the  form (L') $x = y \rightarrow (A(x,x) \rightarrow A(x,y))$.
	
\end{lemma}

\begin{proof}
	L obviously implies (L') as a particular case.
	In the other direction we first show that $x = y \rightarrow (A(x,x) \leftarrow A(x,y))$.
	We have that $x = y \rightarrow (A(x,x) \rightarrow A(x,y))$ and $A(x,y)$ arises
	from $A(x,x)$ by replacing some occurrences of $x$ by $y$. So $A(x,x)$ arises
	from replacing some occurrences of $y$ in $A(x,y)$  by $x$. Hence we have $y = x \rightarrow (A(x,y) \rightarrow A(x,x))$.
	Also from (L') we can derive $x = y \rightarrow y = x$ and using this we get as desired.
	We can now proceed inductively on the structure of the formula $A$. The case of negation is immediate.
	For conjunction we use the tautology $(A\rightarrow B) \rightarrow (C \rightarrow D)\rightarrow (A\& C \rightarrow B \& D)$
	and for quantification we use generalisation, (QImp) and modus ponens.
	\qed
\end{proof}

This lemma justifies that "there is only one necessary truth".

\begin{lemma} We have $\forall x, y . [x=x] =  [y=y]$.
\end{lemma}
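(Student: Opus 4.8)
The plan is to reduce the claim to a single application of axiom (B) in the degenerate case where the abstraction sequence is empty, combined with the fact that $x=x$ and $y=y$ are both theorems of $T1$.

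First I would record that by the identity axiom (Id) both $\vdash x = x$ and $\vdash y = y$ hold. Since each of these is a theorem, the biconditional $(x=x) \leftrightarrow (y=y)$ is a tautological consequence of the two, and hence $\vdash (x=x) \leftrightarrow (y=y)$. Next I would apply the necessitation rule (N) to this theorem to obtain $\vdash \square\big((x=x) \leftrightarrow (y=y)\big)$, relying on the fact that (N) is applicable to any theorem, including one carrying the free variables $x$ and $y$ (the same freedom is already used in Lemma 1.20 when necessitating $x=x$).

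Now I would invoke axiom (B), namely $[A]_{\bar{x}} = [B]_{\bar{x}} \leftrightarrow \square \forall \bar{x}. (A \leftrightarrow B)$, in the case where $\bar{x}$ is the empty sequence; equivalently I may use the form (B'1) established in Lemma 1.26, that is $\square(A \leftrightarrow B) \leftrightarrow [A]=[B]$. Taking $A$ to be $x=x$ and $B$ to be $y=y$, the right-to-left direction together with the previous step and modus ponens yields $\vdash [x=x] = [y=y]$. Two applications of generalization (Gen) then give $\vdash \forall x, y.\, [x=x] = [y=y]$, as desired.

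The only point requiring care is the degenerate instance of (B): one must check that when $\bar{x}$ is empty the quantifier prefix $\forall \bar{x}$ drops away, so that (B) genuinely reduces to the propositional identity $[A]=[B] \leftrightarrow \square(A \leftrightarrow B)$. Once that reduction is granted—or once one simply appeals to (B'1) directly—the argument is routine, so I do not anticipate a substantial obstacle. Conceptually, this lemma records that in $T1$ all identity statements $t=t$ denote one and the same proposition, which is exactly the collapse of necessary truths characteristic of the first conception of intensionality underlying $T1$.
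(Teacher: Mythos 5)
Your proposal is correct and follows essentially the same route as the paper: establish $(x=x)\leftrightarrow(y=y)$, apply necessitation, use (B'1) to pass to identity of the abstracts, and finish with generalization. The extra care you take over the degenerate empty-sequence instance of (B) is harmless but unnecessary once (B'1) is invoked, which is exactly what the paper does.
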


\begin{proof}
	
	We have $x = x \leftrightarrow y = y$. We apply necessitation and (B'1) and then generalisation to get as desired. 
	\qed
\end{proof}

\section{System T2}

Bealer's logic T2 is given the following Hilbert axiomatic system:
	
	\begin{enumerate}
		\item All propositional tautologies
		\item (Ins) $\forall x. A(x) \rightarrow A(t)$, $t$ free for $v$ in $A$.
		\item (QImp) $\forall x. (A\rightarrow B) \rightarrow (A \rightarrow \forall x. B)$, $x$ not free in $A$
		\item (Id) $x = x$
		\item (L) $x = y \rightarrow (A(x,x) \leftrightarrow A(x,y))$,  $y$ free for $x$ in the positions it replaces $x$.
		\item $[A]_{\bar{x}} \neq [B]_{\bar{y}}$, $\bar{x}$ and $\bar{y}$ of different lengths
		\item $[A]_{\bar{x}} = [A']_{\bar{x}'}$ (equality modulo renaming bound variables)
		\item $[A]_{\bar{x}} =  [B]_{\bar{x}} \rightarrow (A \leftrightarrow B)$
		
		\item $[A]_{\bar{x}} \neq [B]_{\bar{y}}$, non-elementary terms belonging to different categories.
		
		\item Let $t' = Bt$ and $r' = Br$ where $B$ is the first operation in the Bealer decomposition of $t'$ and $r'$.
		Then $t = r \leftrightarrow t' = r'$
		
		\item Let $t = Bt'r'$ and $r = Bt''r''$ where  $B$ is the first operation in the Bealer decomposition of $t$ and $r$.
		The $t = r \leftrightarrow t ' = t'' \& r' = r''$.
		
		\item If $[F(x_1,...,x_n)]_{x_1...x_n} = s$  and $G$ occurs in $s$ then $[G(y_1,...,y_m)]_{y_1...y_m} \neq t$ whenever $F$ occurs in $t$ (Non-circularity)

	\end{enumerate}
	\begin{enumerate}
		
		\item (MP) From $A$ and $A\rightarrow B$ infer $B$ 
		
		\item (TGen) Suppose that $F$ does not occur in $A(v)$.
		If $\vdash A([F(x_1,...,x_n)]_{x_1...x_n})$ then we can infer
		$\vdash  A(t)$ whenever $t$ is complex, of arity $n$ and free for $v$ in $A$.
		
		\item (Gen) From $A$ infer $\forall x. A$ 
		
	\end{enumerate}

	\section{Model Structures}

	Let $F = \emptyset$ and $T= \{\emptyset\}$. A model $\mathcal{M}$ consists of a set $\mathcal{D}$ with a decomposition into the union of \emph{disjoint} sets $\mathcal{D}_i$ for $i\geq -1$,
	a distinguished element $id \in \mathcal{D}_2$, a set $\mathcal{H}$ of functions $H$ whose domain is $\mathcal{D}$ and such that
	$H$ is the identify on $\mathcal{D}_{-1}$ and on $\mathcal{D}_0$ has range $\{T,F\}$ and on 
	$\mathcal{D}_i$ has range $\mathcal{P}(\mathcal{D}^i)$ where $\mathcal{D}^i = \mathcal{D}\times...\times \mathcal{D}$ ($i$ terms) for $i\geq 1$.
	All these functions must satisfy $H(id) = \{(x,y): x = y\}$. There is a distinguished function $\mathcal{G}$ representing the \emph{actual extension}.
	Furthermore we have unary (partial) operations $n,e,u,c,i,r$ and binary (partial)
	operations $k, p_n$ with $n\geq 0$. These operations restrict to the following domains and ranges:\\

		 $n: \mathcal{D}_i \rightarrow  \mathcal{D}_i, i \geq 0$

		 $e:\mathcal{D}_i\rightarrow  \mathcal{D}_{i+1}, i \geq 0$

	 $u:\mathcal{D}_i\rightarrow \mathcal{D}_{i-1}, i\geq 1$ and $u:\mathcal{D}_0\rightarrow  \mathcal{D}_0$.

	 $c:\mathcal{D}_i\rightarrow  \mathcal{D}_i, i \geq 3$

		  $i:\mathcal{D}_i\rightarrow  \mathcal{D}_i, i \geq 2$

		  $r:\mathcal{D}_i\rightarrow \mathcal{D}_{i-1}, i \geq 2$

		  $a: \mathcal{D}_i\times\mathcal{D}_i\rightarrow  \mathcal{D}_i, i \geq 0 $

		 $p_n:\mathcal{D}_i\times\mathcal{D}_j\rightarrow \mathcal{D}_{i+n-1}, i \geq 1, j\geq n$\\

	We define inductively $p_0^n(d, x_1,...,x_n) = p_0(p_0^{n-1}(d, x_2,...,x_{n-1}), x_1)$ and
	$p_0^1(d,x_1) = p_0(d,x_1)$.

	Furthermore we have the important constraints on how the functions in $\mathcal{H}$ relate to these operations. 
	Here $d$ denotes a suitable member of $\mathcal{D}^i$. For all $\mathcal{H}\in H$
	we must have:\\

		$d'\in \mathcal{H}n(d) \text{ iff } d' \notin \mathcal{H}d$
		
		 $(x_1,...,x_i,x_{i+1})\in \mathcal{H}e(d) \text{ iff } (x_1,...,x_i) \in \mathcal{H}d$

		$(x_1,...,x_{i-1})\in \mathcal{H}u(d) \text{ iff there is a $x_i$ such that } (x_1,...,x_i) \in \mathcal{H}d$  
		
		$(x_1,...,x_{i-1},x_i)\in \mathcal{H}c(d) \text{ iff } (x_i, x_1,...,x_{i-1}) \in \mathcal{H}d$  
		
		$(x_1,...,x_{i-1},x_i)\in \mathcal{H}i(d) \text{ iff } (x_1,...,x_i,x_{i-1}) \in \mathcal{H}d$  
		
		 $(x_1,...,x_{i-1},x_i)\in \mathcal{H}r(d) \text{ iff } (x_1,...,x_i,x_i) \in \mathcal{H}d$  
		
		 $d''\in \mathcal{H}k(d,d') \text{ iff } d'' \in \mathcal{H}d \text{ and } d'' \in \mathcal{H}d'$
		
		 $(x_1,...,x_n) \in \mathcal{H}p_0(d, d') \text {  iff } (x_1,...,x_n,d') \in \mathcal{H}d$ 
		
		$(x_1,...,x_{i-1}, y_1,..,y_n) \in \mathcal{H}p_n(d,d') \text{ iff } (x_1,...,x_{i-1}, p_0^n(d',y_1,...,y_n) )\in \mathcal{H}d$ \\

	Here we consider also the $0$-tuple as being $\emptyset$.
	Notice that elements of $D^0$ are propositions seen as objective entities according to their meaning.
	The extension functions $\mathcal{H}$ determine their Boolean values which are $T = \{\emptyset\}$ and 
	$F =\emptyset$.
	
	\begin{definition}
	 Given a model $\mathcal{M}$, an \emph{interpretation} $\mathcal{I}$ assigns $i$-ary predicate  elements of $\mathcal{D}_i$
	and $=$ to $id$. An \emph{assignment} $\mathcal{A}$ assigns to variables elements in $\mathcal{D}$. 
	\end{definition}

Bealer decomposition is crucial to define the denotation of a term relative to a model, interpretation and assignment.

\begin{definition}
	Given $\mathcal{M},\mathcal{I}$ and $\mathcal{A}$  we define the \emph{denotation} $D_{\mathcal{I},\mathcal{A},\mathcal{M}}t$
	of a term $t$ of $L^{\omega}$ as follows. If $t$ is a variable then it is $\mathcal{A}(t)$. If $t$ is an elementary term $[F(v_1,...,v_n)]_{v_1...v_n}$ then
	it is $\mathcal{I}(F)$.  Otherwise, consider the Bealer decomposition $\mathcal{B}t$ of $t$. If $\mathcal{B}t = Bt'$ (respectively $Bt't''$) where $B$ is a Bealer operation then we define
	inductively\\
	
	$D_{\mathcal{I},\mathcal{A},\mathcal{M}}t = bD_{\mathcal{I},\mathcal{A},\mathcal{M}}t'$ (respectively $D_{\mathcal{I},\mathcal{A},\mathcal{M}}t = bD_{\mathcal{I},\mathcal{A},\mathcal{M}}t' D_{\mathcal{I},\mathcal{A},\mathcal{M}}t'' )$\\

	where $b$ is $i,c,e,u,n,r,k,p_n$ if
	$B$ is $I,C,E,U,N,R,K,P_n$ respectively.
	
\end{definition}

\begin{definition}
	We say that a formula $A$ is \emph{true} for $\mathcal{M},\mathcal{I}$ and $\mathcal{A}$  (we also write $T_{\mathcal{I},\mathcal{A},\mathcal{M} }  (A)$ ) if $\mathcal{G}D_{\mathcal{I},\mathcal{A},\mathcal{M}}[A] = T$.
\end{definition}

It is simple to see that the denotation is invariant modulo renaming bound variables.
From now one, we will in most cases drop the subscripts in $D_{\mathcal{I},\mathcal{A},\mathcal{M}}$ and assume we a working with a given model , interpretation and assignment.

We are now interested in special classes of models for which T1 and T2 are sound and complete.

\begin{definition}

A model $\mathcal{M}$ is called type 1 if for all $i \geq -1$, $x,y \in \mathcal{D}_i$ 

\[\forall \mathcal{H}\in H \quad \mathcal{H}(x) = \mathcal{H}(x) \rightarrow  x = y \tag{type 1 model condition}\]

 A model $\mathcal{M}$ is called type 2 if  the operations are:
 
 \begin{enumerate}
  \item  one-to-one 
  \item disjoint on their ranges
  \item  and non-cycling

  \end{enumerate}

By the first two conditions each element of the model has a unique (possibly infinite) decomposition in terms of the operations. This decomposition is a tree having as leaves indecomposable elements. The non-cycling condition means that the same element cannot appear in more than one place in the tree.

A formula $A$ is T1-valid (we write $\vdash_{T1} A$) iff  $A$ is true for all type 1 model $\mathcal{M}$, interpretation $\mathcal{I}$ and assignment $\mathcal{A}$. In the same way a formula $A$ is T2-valid ($\vdash_{T2} A$) iff  $A$ is true for all type 2 model $\mathcal{M}$, interpretation $\mathcal{I}$ and assignment $\mathcal{A}$.

\end{definition}
	
Note that for type 2 models only $\mathcal{G}$ is relevant. The type 1 condition restricts the possible operations on the model.
	
\begin{example} A standard model for Kelley-Morse set theory furnishes an example
	of a type 1 model with a single extensional function. Type 1 models can be seen as generalised set theory models with a plurality of membership predicates $\in_{\mathcal{H}}$
	such that the axiom of extensionality is only valid globally.
	Consider a language with a single unary predicate $M$. We can construct a type 2 model with
	no constants, $D_0$, $D_1$ and $D_i = \emptyset$ for $i\geq 2$.
\end{example}

\begin{lemma}
	
	Let $v$ be an externally quantifiable variable in $[B(v)]_{\bar{x}}$ and let $t$ be free for $v$ in $[B(v)]_{\bar{x}}$. Consider
	any model structure $\mathcal{M}$ and any interpretation $\mathcal{I}$ and assignment $\mathcal{A}$. Let $\mathcal{A}'$ be an assignment which is just
	like $\mathcal{A}$ except that $\mathcal{A}'(v) = Dt$. Then
	
	\[ D_{\mathcal{A'}}[B(v)]_{\bar{x}}= D[B(t)]_{\bar{x}}     \]
\end{lemma}

\begin{proof}

	By induction on the Bealer decomposition of $[B(v)]_{\bar{x}}$. 
	The base case is either an elementary abstract (in which case there are no externally quantifiable variables) or
	else a variable. If it is not $v$ we are done and if it is $v$ the result is trivial.
	
	Suppose that $[B(v)]_{\bar{x}} = Bs(v)$ were $B$ is some unary operation. If $B$ is  $I$,  $C$, $N$, $U$ or $E$ then $t$ is still free
	for $v$ in $d(v)$ and hence by the induction hypothesis $D_{\mathcal{A'}}s(v) = Ds(t)$ so that $D_{\mathcal{A'}}[B(v)]_{\bar{x}} =
	bD_{\mathcal{A'}}s(v)  = b Ds(t) = DBs(t)$ where $b$ is the corresponding model function.
	If $B$ is $R$ then there is a problem that $s$ has a new bound variable which may occur in $t$ in such a way that  $t$ is no longer free
	for $v$ in $s$. But we may rename this bound variable to obtain an $\alpha$-equivalent term $s'$ where this problem does not occur.
	Note that the substitution \emph{does not effect which of the seven syntactic categories an intensional abstract belongs to}.
	Suppose $[B(v)]_{\bar{x}} = Bs(v)s'(v)$ for a binary operation $B$. If $B$ is $A$ then $t$ remains free for $v$ in $s(v)$ and $s(v')$
	and the previous argument applies. If $B$ is $P_k$ then we have the problem of the new bound variable in the first
	argument possibly occurring in $t$. We thus need to use $\alpha$-equivalence
	so $t$ remains free for $v$ in $s(v)$. 
	In the case of $0$-predication we have\\

	$D_{\mathcal{A'}}[B(v)]_{\bar{x}} = p_0( D_{\mathcal{A'}}[B'(v)]_{\bar{x} w}, D_{\mathcal{A'}}s'(v))$\\
	
	If $s'$ is not a variable or a variable distinct from $v$,  then we may apply the induction hypothesis to obtain:\\
	
	$p_0( D_{\mathcal{A'}}[B'(v)]_{\bar{x} w}, D_{\mathcal{A'}}s'(v))= p_0( D[B'(t)]_{\bar{x} w}, Ds'(t)) = D[B(t)]_{\bar{x}}$\\
	
	If $s'$ is $v$ then we obtain
	
	\[p_0( D_{\mathcal{A'}}[B'(v)]_{\bar{x} w}, D_{\mathcal{A'}}v) \]
	\[ = p_0( D_{\mathcal{A'}}[B'(v)]_{\bar{x} w}, \mathcal{A'}v) \]
	\[ = p_0( D [B'(t)]_{\bar{x} w}, Dt   )\]
	\[ = D[B(t)]_{\bar{x}} \]
	
	In the $k$-predication case for $k>0$ we have, for $s'(v) = [C(v)]_{\bar{y}\bar{z}}$ where $\bar{z}$ are
	relativized variables of $[C(v)]_{\bar{y}}$ in $[B(v)]_{\bar{x}}$\\
	
	$D_{\mathcal{A'}}[B(v)]_{\bar{x}}  = p_k( D_{\mathcal{A'}}[B'(v)]_{\bar{x'} w}, D_{\mathcal{A'}}[C(v)]_{\bar{y}\bar{z}} )$\\
	
	where $\bar{x'}$ is obtained from $\bar{x}$ by omitting $\bar{z}$.  The result now follows easily from induction.
	
\end{proof}

\begin{lemma}
	
	For all $\mathcal{M}$ and $\mathcal{I}$, $\mathcal{A}$ we have $D[A]_{x_1...x_k} \in \mathcal{D}_k$

\end{lemma}

\begin{proof}
	 This follows
	by easy induction from the definition of $D$ and the model operations.
	
\end{proof}

Given a sequence of Bealer operations of the form $\Sigma$  we denote its associated $n$-permutation by $\sigma$. This permutation can also be applied to $n$-tuples $(x_1,...x_n)$ and we denote such an application by $\sigma(x_1,...,x_n)$ just as we denote its application to variable sequences by $\sigma x_1....x_n$.

We will make frequent use of the following:

\begin{lemma}For any permutation $\sigma$ of $(x_1,...,x_n)$ we have
	\[ \sigma(v_1,...,v_n) \in \mathcal{H}D[F]_{\sigma x_1,...,x_n } \leftrightarrow (v_1,...,v_n) \in \mathcal{H}D[F]_{x_1...x_n} \]
	
\end{lemma}

\begin{proof}
	
Let the Bealer decomposition of  $[F]_{ x_1...x_n}$ be $\Sigma_1 \mathcal{B}[F]_{ x'_1...x'_n}$ where  $\mathcal{B}[F]_{  x'_1...x'_n}$ does not begin with $C$ or $I$. Then Bealer decomposition  of $[F]_{ \sigma x_1,...,x_n}$ will be of the form $\Sigma_2 \mathcal{B}[F]_{  x'_1...x'_n}$.
By the definition of denotation and model we have for any $(x_1,...,x_n)$:

\[ (x_1,...,x_n) \in \mathcal{H}D[F]_{ x_1...x_n} \text{ iff }  \sigma_1(x_1,...,x_n) \in  \mathcal{H}D[F]_{ x'_1...x'_n} \]

\[ (x_1,...,x_n) \in \mathcal{H}D[F]_{\sigma  x_1,...,x_n} \text{ iff }  \sigma_2(x_1,...,x_n) \in  \mathcal{H}D[F]_{ x'_1...x'_n} \]

where $ x'_1...x'_n = \sigma_1 (x_1,...,x_n) $ and $x'_1...x'_n = \sigma_2\sigma (x_1,...,x_n)$ and hence $\sigma = \sigma^{-1}_2\sigma_1$
and $\sigma_1 = \sigma_2\sigma$.

Hence given $(x_1,...x_n)$ we have $(x_1,...,x_n) \in \mathcal{H}D[F]_{ x_1...x_n} $ iff  $ \sigma_1(x_1,...,x_n) =  \sigma_2\sigma(x_1,...,x_n) \in  \mathcal{H}D[F]_{ x'_1...x'_n}$ 
iff $\sigma(x_1,...,x_n) \in  \mathcal{H}D[F]_{\sigma  x_1,...,x_n}$.

\end{proof}

	\begin{lemma} For all $\mathcal{I}$,$\mathcal{A}$,$\mathcal{M}$,
		$F(t_1,...,t_n)$ is true iff $(Dt_1,
		...,Dt_n)\in \mathcal{I}(F)$.
	\end{lemma}
	
	\begin{proof}
		By definition $F(t_1,...,t_n)$ is true iff $\mathcal{G}D([F(t_1,...,t_n)])= T$.
		But 
		
		\[[F(t_1,...,t_n)] = P_0[F(v_1,...,t_n)]_{v_1}t_1 = P_0P_p[F(v_1,v_2,...,t_n)]_{v_1v_2}t_2t_1 =P^n_0[F(v_1,...,v_n)]_{v_1...v_n}t_n...t_2t_1\]

		hence by definition $\mathcal{G}D([F(t_1,...,t_n)])= T$ iff 
		
		\[ Dt_1 \in 
		\mathcal{G}p_0(p_0(...p_0(D[F(v_1,...,v_n)]_{v_1...v_n},
		Dt_n),..., Dt_3 ), 
		Dt_2)\]
		
		iff
		
		\[ (Dt_1, Dt_2) \in 
		\mathcal{G}p_0(p_0(...p_0(D[F(v_1,...,v_n)]_{v_1...v_n},
		Dt_n)..., Dt_3 )\]
		
		and so on until obtaining
		
		\[(Dt_1,
		...,Dt_n)\in D[F(v_1,...,v_n)]_{v_1...v_n}=
		\mathcal{I}(F)   \]
		
	\end{proof}
	
	\begin{lemma}
		$T_{\mathcal{I}\mathcal{A}\mathcal{M}}(A \& B)$ iff $T_{\mathcal{I}\mathcal{A}\mathcal{M}}(A)$ and $T_{\mathcal{I}\mathcal{A}\mathcal{M}}(B)$.
		Also $T_{\mathcal{I}\mathcal{A}\mathcal{M}}(\neg A)$ iff it is not the case that $T_{\mathcal{I}\mathcal{A}\mathcal{M}}(A)$.
		
	\end{lemma}
	
	This is an immediate consequence of the definition of $T_{\mathcal{I}\mathcal{A}\mathcal{M}}$.

\section{Soundness of T1}
In this section we work in a type 1 model $\mathcal{M}$. We will use Polish
notation for the model operations and omit parentheses when possible.
We note first that Lemma 4.9  (for the case of equality) and Lemma 4.10 yield immediately:

\begin{lemma}
	For all models we have propositional tautologies, (MP), (Id)  and (L') are sound.
\end{lemma}

It is immediate by lemma 4.7 that axiom 6 is sound and the soundness of axiom 7 is obvious.

The following is of central importance
\begin{lemma} (Bealer's lemma)
	Let $v$ be free in $[A(v)]_{\bar{x}}$. Let $\mathcal{M}$ be type 1 and $\mathcal{I}$, $\mathcal{A}$ be an interpretation and assignment. Then
	
	\[D[A(v)]_{\bar{x}} = p_0D[A(v)]_{\bar{x} v}	\mathcal{A}(v) \]
\end{lemma}

Note that when we write $(x_1,...,x_n)$ we include the case $n=0$ in which case the sequence
is taken to be the empty set $\emptyset$.

\begin{proof}

We proceed by induction on the Bealer decomposition of $[A(v)]_{\bar{x}}$.

Let $\mathcal{B}[A(v)]_{\bar{x}} = K\mathcal{B}[B(v)]_{\bar{x}}\mathcal{B}[C(v)]_{\bar{x}}$.
Then 

\[D[A(v)]_{\bar{x}} = kD[B(v)]_{\bar{x}}D[C(v)]_{\bar{x}} = kp_0D[B(v)]_{\bar{x}v}\mathcal{A}(v)p_0D[C(v)]_{\bar{x}v}\mathcal{A}(v)\]

 by the induction hypothesis. We use the type 1 condition to show that

\[ kp_0D[B(v)]_{\bar{x}v}\mathcal{A}(v)p_0D[C(v)]_{\bar{x}v}\mathcal{A}(v) = p_0D[A(v)]_{\bar{x}v}\mathcal{A}(v) \]

For any $\mathcal{H}$ let  $(x_1,...,x_n) \in \mathcal{H} kp_0D[B(v)]_{\bar{x}v}\mathcal{A}(v)p_0D[C(v)]_{\bar{x}v}\mathcal{A}(v)$. Then this is equivalent
to 
\[(x_1,...,x_n) \in \mathcal{H}p_0D[B(v)]_{\bar{x}v}\mathcal{A}(v)\text{ and }  (x_1,...,x_n) \in \mathcal{H}p_0D[C(v)]_{\bar{x}v}\mathcal{A}(v) \]

which is equivalent to
\[(x_1,...,x_n,\mathcal{A}(v)) \in \mathcal{H}D[B(v)]_{\bar{x}v}\text{ and }  (x_1,...,x_n,\mathcal{A}(v)) \in \mathcal{H}D[C(v)]_{\bar{x}v} \]

The Bealer decomposition of $[A(v)]_{\bar{x}v}$ is $\Sigma K[B(v)]_{\bar{x}'}[C(v)]_{\bar{x}'}$
were $\Sigma$ is such that $\bar{x'} = \sigma\bar{x}v$ is the normalised permutation of $\bar{x}v$ for  $[A(v)]_{\bar{x}v}$. But then the above condition is equivalent to

\[\sigma(x_1,...,x_n,\mathcal{A}(v)) \in \mathcal{H}D[B(v)]_{\bar{x}'}\text{ and }  \sigma(x_1,...,x_n,\mathcal{A}(v)) \in \mathcal{H}D[C(v)]_{\bar{x}'} \]

which is equivalent to $\sigma(x_1,...,x_n,\mathcal{A}(v)) \in \mathcal{H}kDB[(v)]_{\bar{x}'}DC[(v)]_{\bar{x}'} $. Let $s$ be the composition of operations corresponding to $\Sigma$. Then the previous condition is equivalent to
$(x_1,...,x_n,\mathcal{A}(v)) \in \mathcal{H}skDB[(v)]_{\bar{x}'}DC[(v)]_{\bar{x}'} =  \mathcal{H}D[A(v)]_{\bar{x}v}$. But this is equivalent to $(x_1,...,x_n) \in \mathcal{H}p_0D[A(v)]_{\bar{x}v}\mathcal{A}(v)$. Hence the conclusion follows from the type 1 condition.

Let  $\mathcal{B}[A(v)]_{\bar{x}} = N\mathcal{B}[A'(v)]_{\bar{x}}$ where $A(v) = \neg A'(v)$. 
 We have $ D[A(v)]_{\bar{x}} = nD[A'(v)]_{\bar{x}} = np_0D[A'(v)]_{\bar{xv}}\mathcal{A}(v)$ by the induction hypothesis. We use the type 1 condition to show that 
 
 \[np_0D[A'(v)]_{\bar{x}v}\mathcal{A}(v) = p_0D[A(v)]_{\bar{xv}}\mathcal{A}(v)  \]

 Then for any $\mathcal{H}$,  
 
 \[(x_1,...,x_n) \in \mathcal{H}np_0D[A'(v)]_{\bar{x}v}\mathcal{A}(v) \text{ iff } (x_1,...,x_n) \notin \mathcal{H} p_0D[A'(v)]_{\bar{x}v}\mathcal{A}(v) \]
 
 \[\text{iff } (x_1,...,x_n,\mathcal{A}(v)) \notin \mathcal{H}[A'(v)]_{\bar{x}v} \]

Let $\bar{x}'$ be the normalised sequence for  $[A'(v)]_{\bar{x}v}$ and $\sigma$ the associated permutation. Then the above condition holds iff

\[ \sigma(x_1,...,x_n,\mathcal{A}(v)) \notin \mathcal{H}[A'(v)]_{\bar{x}'} \text{ iff }  \sigma(x_1,...,x_n,\mathcal{A}(v)) \in  \mathcal{H}n[A'(v)]_{\bar{x}'}  \]

The Bealer decomposition of $[A(v)]_{\bar{x}v}$ is $\Sigma N [A'(v)]_{\bar{x'}}$
where $\sigma$ corresponds to $\Sigma$ so $D[A(v)]_{\bar{x}v} = snD[A'(v)]_{\bar{x}'}$
where $s$ is the composition of model operation corresponding to $\Sigma$.
Hence we have  $\sigma(x_1,...,x_n,\mathcal{A}(v)) \in  \mathcal{H}n[A'(v)]_{\bar{x}'}$ iff $(x_1,....,x_n,\mathcal{A}(v)) \in \mathcal{H}D[A(v)]_{\bar{x}v}$ iff $(x_1,...,x_n)\in \mathcal{H}p_0D[A(v)]_{\bar{x}v}\mathcal{A}(v)$ and our conclusion follows from the type 1 model condition.

Let $\mathcal{B}[A(v)]_{\bar{x}} = U[A'(v)]_{\bar{x}w}$ where $A(v) = \exists w. A'(v)$. Then
$D[A(v)]_{\bar{x}} = u D[A'(v)]_{\bar{x}w} = up_0D[A'(v)]_{\bar{x}wv}\mathcal{A}(v)$ by induction.
We show that $up_0D[A'(v)]_{\bar{x}wv}\mathcal{A}(v) = p_0[A(v)]_{\bar{x}v}\mathcal{A}(v)$.
Take a $\mathcal{H}$. Then

\[ (x_1,...,x_n) \in  \mathcal{H}up_0D[A'(v)]_{\bar{x}wv}\mathcal{A}(v) \text{ iff there is $y$ such that } 
 (x_1,...,x_n,y) \in  \mathcal{H}p_0D[A'(v)]_{\bar{x}wv}\mathcal{A}(v)
 \]

\[\text{iff there is a $y$ such that }  (x_1,...,x_n,y,\mathcal{A}(v)) \in  \mathcal{H}D[A'(v)]_{\bar{x}wv} \]

As previously let $\bar{x}' = \sigma \bar{x}v$ be the normalized sequence for $[A(v)]_{\bar{x}v}$.
The Bealer decomposition of $[A(v)]_{\bar{x}v}$ is $\Sigma U\mathcal{B}[A'(v)]_{\bar{x}'w}$
so $ p_0[A(v)]_{\bar{x}v}\mathcal{A}(v) = p_0suD[A'(v)]_{\bar{x}'w}\mathcal{A}(v)$. The above
condition is equivalent to

\[\text{there is a $y$ such that }  (x'_1,...,x'_{n+1},y) \in  \mathcal{H}D[A'(v)]_{\bar{x}'w} \]

where $\mathcal{A}(v)$ occupies the position $x'_i$ of $v$ in $\bar{x}'$. But this is equivalent to

\[ (x'_1,...,x'_{n+1}) \in  \mathcal{H}uD[A'(v)]_{\bar{x}'w} \]

which is equivalent to

\[ (x_1,...,x_n, \mathcal{A}(v)) \in  \mathcal{H}suD[A'(v)]_{\bar{x}'w} \text{ iff } (x_1,...,x_n) \in  \mathcal{H}p_0suD[A'(v)]_{\bar{x}'w}\mathcal{A}(v) = \mathcal{H}p_0D[A(v)]_{\bar{x}v}\mathcal{A}(v)  \]

and the result follows from the type 1 model condition.

Let  $\mathcal{B}[A(v)]_{\bar{x}} = E\mathcal{B}[A(v)]_{\bar{x}'}$ with $\bar{x}= \bar{x}'y$ and $y$
does not occur free in $A(v)$. We have
$D[A(v)]_{\bar{x}} = eD[A(v)]_{\bar{x}'} = ep_0[A(v)]_{\bar{x}'v} \mathcal{A}(v)$ by induction.
As previously take a $\mathcal{H}$. Then

\[(x_1,....,x_n) \in \mathcal{H}ep_0[A(v)]_{\bar{x}'v} \mathcal{A}(v) \text{ iff } (x_1,....,x_{n-1}) \in \mathcal{H}p_0[A(v)]_{\bar{x}'v} \mathcal{A}(v) \]

\[\text{iff }  (x_1,....,x_{n-1},\mathcal{A}(v) ) \in \mathcal{H}[A(v)]_{\bar{x}'v}   \]

The Bealer decomposition of $[A(v)]_{\bar{x}v}$ is $\Sigma \mathcal{B}[A(v)]_{\bar{z}} = \Sigma E\mathcal{B}[A(v)]_{\bar{z'}} $ where $\bar{z} = \sigma\bar{x}v$ is the normalizing permutation which puts $v$ in its proper place and $\bar{z}'$ drops the last element of $\bar{z}$ (not $v$ and the same as $x_n$). Put $\bar{z}' = \sigma'\bar{x'}v$ and let $\sigma'$ correspond to composition of model operations $s'$. Then  the above condition is equivalent to 

\[ (x_1,....,x_{n-1},\mathcal{A}(v) ) \in \mathcal{H}s'D[A(v)]_{\sigma'\bar{x}'v} =   \mathcal{H}s'D[A(v)]_{\bar{z}'}  \]

which is equivalent to

\[ (x_1,....,x_{n-1},x_n, \mathcal{A}(v)) \in \mathcal{H}ies'D[A(v)]_{\bar{z}'} =  \mathcal{H}seD[A(v)]_{\bar{z}'} \]

which is turn is equivalent to:

\[ (x_1,....,,x_n)  \in \mathcal{H}p_0seD[A(v)]_{\bar{z}'}\mathcal{A}(v) = \mathcal{H}p_0D[A(v)]_{\bar{x}v}\mathcal{A}(v)\]

and the result follows as in the other cases.

Let $\mathcal{B}[A(v)]_{\bar{x}} = B\mathcal{B}[A(v)]_{\bar{x'}}$ with $B$ equal to $C$ or $I$ where $
\bar{x'} = \sigma_b\bar{x}$ for $b$ equal to $c$ or $i$. Then $D[A(v)]_{\bar{x}} = bD[A(v)]_{\bar{x'}}=
bp_0[A(v)]_{\bar{x'}v}\mathcal{A}(v)$ using the induction hypothesis.
We use the type 1 condition to show that

\[bp_0[A(v)]_{\bar{x'}v}\mathcal{A}(v) =  D p_0[A(v)]_{\bar{x}v}\mathcal{A}(v)\]

For any $\mathcal{H}$ we have

\[(x_1,...,x_n) \in \mathcal{H}bp_0[A(v)]_{\bar{x'}v}\mathcal{A}(v) \text{ iff } (x'_1,...,x'_n) \in \mathcal{H}p_0[A(v)]_{\bar{x'}v}\mathcal{A}(v)\]

 where $(x_1,...,x_n) = \sigma_b(x_1,...,x_n )$. But

 \[ (x'_1,...,x'_n) \in \mathcal{H}p_0[A(v)]_{\bar{x'}v}\mathcal{A}(v) \text  { iff }
 (x'_1,...,x'_n,\mathcal{A}(v)) \in \mathcal{H}[A(v)]_{\bar{x'}v} \]

 \[ \text {iff }
 (x_1,...,x_n,\mathcal{A}(v)) \in \mathcal{H}[A(v)]_{\bar{x}v}\text{ iff } (x_1,...,x_n) \in \mathcal{H}p_0[A(v)]_{\bar{x}v}\mathcal{A}(v) \]

 Hence by the type 1 model condition we get that $bp_0[A(v)]_{\bar{x'}v}\mathcal{A}(v) =  D p_0[A(v)]_{\bar{x}v}\mathcal{A}(v)$.
 
 Now let $\mathcal{B}[A(v)]_{\bar{x}} = R\mathcal{B}[A'(v)]_{\bar{x}w}$. Then $[A(v)]_{\bar{x}}$ has
 a reflected variable and hence so does $[A(v)]_{\bar{x}v}$ (it is also belongs to category 4 and is a prime reflection term). We distinguish between two cases. Either $v$ is the right-most reflected variable or it is not. Assume that is is not. Let $t$ be the right-most argument in which $t$ occurs. Then the right-most reflected variable is the last element $x_n$   of $\bar{x}$.
 We  have $D[A(v)]_{\bar{x}} = rp_0D[A'(v)]_{\bar{x}wv}\mathcal{A}(v)$ by induction, were
 $A'(v)$ is obtained from $A(v)$ by replacing the argument $t$ with $t[ w/x_n]$.
 Fix
 $\mathcal{H}$.  Then
 
 \[ (x_1,...x_n) \in \mathcal{H}rp_0D[A'(v)]_{\bar{x}wv}\mathcal{A}(v) \text{ iff }   (x_1,....,x_n,x_n,\mathcal{A}(v) ) \in \mathcal{H}D['A(v)]_{\bar{x}wv}  \]
 
 Now the Bealer decomposition of $[A(v)]_{\bar{x}v}$ in this case will be $\Sigma R\mathcal{B}[A'(v)]_{\bar{x}'w}$. Here $\bar{x}v = \sigma\bar{x}'$. $\bar{x}'$ moves $v$ to its proper place so that $\bar{x}'$ is the normalized permutation of $\bar{x}v$ except that $x_n$ is placed at the end, being the reflected variable with right-most occurrence.
 We have $D[A(v)]_{\bar{x}v} = srD[A'(v)]_{\bar{x}'w}$. Then the condition above is equivalent to:
 
 \[ (x'_1,....,x'_{n+1}, x'_{n+1} ) \in \mathcal{H}D[A'(v)]_{\bar{x}'w}  \]
with $x'_{n+1} = x_n$. But this is equivalent to:
 \[ (x'_1,....,x'_{n+1}) \in \mathcal{H}rD[A'(v)]_{\bar{x}'w}  \]
 which is turn is equivalent to 
 
  \[ (x_1,....,x_n, \mathcal{A}(v)) \in \mathcal{H}srD[A'(v)]_{\bar{x}'w}  \]

which is finally equivalent to

 \[ (x_1,....,x_n) \in \mathcal{H}p_0srD[A'(v)]_{\bar{x}'w}\mathcal{A}(v) = \mathcal{H}p_0D[A(v)]_{\bar{x}v }\mathcal{A}(v)  \]
 and the result follows as in the other cases.
 Now for the second case: $v$ is a reflected variable with the right-most occurrence. Let
 the right-most argument in which it occurs be $t$.
 The Bealer decomposition of $[A(v)]_{\bar{x}v}$ in this case will be $\Sigma R\mathcal{B}[A'(v)]_{\bar{x}'vw}$ where $A'(v)$ is obtained from $A(v)$ by replacing $t$ with $t[w/v]$. Here $\bar{x}v = \sigma\bar{x}'v$. $\bar{x}'$ moves $x_{n}$ back to its proper place so that $\bar{x'}$ is a normalized sequence. Now either in $[A'(v)]_{\bar{x}'vw}$ $v$ is still the right-most reflected variable or else it is $x_n$. If $v$ it is still the right-most reflected variable we continue with the Bealer decomposition until  we arrive at term in which the right-most reflected variable is $x_n$.
 The decomposition will be of the form:
 
 \[ [A(v)]_{\bar{x}v} = \Sigma_1 R\Sigma_2R....\Sigma_kR\mathcal{B}[A^{(k)}(v)]_{\bar{z}w_k} \]
 
 where $\bar{z}$ (normalized except for $w_k$ being placed at the end) contains the elements of $\bar{x}$,$v$ and new variables $w_1,...,w_{k-1}$.  Here $[A^{(k)}(v)]_{\bar{z}w_k}$ is now as the previous case (modulo a permutation).


  It is easy to see that

 \[ (x_1,...x_n)  \in \mathcal{H}p_0  s_1 rs_2r....s_kRD[A^{(k)}(v)]_{\bar{z}w_k}\mathcal{A}(v)  \]
 
 iff
 
 \[ (x'_1,...,x'_{n+k+1})  \in \mathcal{H}D[A^{(k)}(v)]_{\bar{z}w_k}  \]
 
 Here $(x'_1,...,x'_{n+k+1})$ follows the sequence of $\bar{z}w_k$, having $x_i$ for $x_i$ (here we are abusing notation) but has $x'_k = \mathcal{A}(v)$ whenever $x'_k$ is a $w_i$ or $v$.
 
 Now $D[A^{(k)}(v)]_{\bar{z}w_k} = sD[A^{(k)}(v)]_{\bar{u}x_n} = srD[A^{(k)}_0 (v)]_{\bar{u}x_nw}$ where $\bar{u}$ is the
 normalized permutation of $\bar{z}w_k$ but with $x_n$ omitted. Hence
 
 \[  (x'_1,...,x'_{n+k+1})  \in \mathcal{H}D[A^{(k)}(v)]_{\bar{z}w_k} \text{ iff }
 (x''_1,...,x''_{n+k}, x_n,x_n) \in \mathcal{H}D[A^{(k)}_0(v)]_{\bar{u}x_nw}   \tag{*}\]
 
 where the $x''_i$ follows the normalized sequence but with $x_n$ omitted and $A^{(k)}_0(v)$ has
 the right-most argument $t$ in which $x_n$ occurs replaced with $t[w/x_n]$.
 Now we have by induction,
 
 \[ (x_1,...,x_n) \in \mathcal{H}D[A(v)]_{\bar{x}}  \text{ iff }  (x_1,...,x_n) \in \mathcal{H}rp_0[A_1(v)]_{\bar{x}wv}\mathcal{A}(v) \]
 
 \[ \text{ iff }  (x_1,...,x_n,x_n,\mathcal{A}(v)) \in \mathcal{H}D[A_1(v)]_{\bar{x}wv} \]

where $A_1$ has the right-most argument $t$ in which $x_n$ appears replaced by $t[ w/x_n]$.
Now $[A_1(v)]_{\bar{x}wv}$ has $v$ reflected in the same arguments and positions as $[A(v)]_{\bar{x}v}$. Hence

\[ [A_1(v)]_{\bar{x}wv} = \Sigma'_1 R\Sigma'_2R....\Sigma'_kR\mathcal{B}[A_1^{(k)}(v)]_{\bar{q}w_k} \]

Hence  \[  (x_1,...,x_n,x_n,\mathcal{A}(v)) \in \mathcal{H}D[A_1(v)]_{\bar{x}wv} \text{ iff }
(q_1,....,q_{n+k+2}) \in \mathcal{H}\mathcal{D}[A_1^{(k)}(v)]_{\bar{q}w_k} \]

Here $\bar{q}$ is the normalized permutation of $\bar{x}wv$ except that $w_k$ is moved to the end.
It has new variables $w_1,...,w_{k_1}$. $(q_1,...,q_{n+k+2})$ follows this sequence and for $v$ or $w_i$ it has $\mathcal{A}$ and for $w$ it has $x_n$. It is clear that $A_1^{(k)}(v)$ is the same as
$A^{(k)}_0(v)$. Examining (*) and noticing that applying a permutation we have

\[  (x''_1,...,x''_{n+k}, x_n,x_n) \in \mathcal{H}D[A^{(k)}_0(v)]_{\bar{u}x_nw}  \text{ iff } (q_1,....,q_{n+k+2}) \in \mathcal{H}\mathcal{D}[A_0^{(k)}(v)]_{\bar{q}w_k} \]
and the result follows.

Finally, consider the case in which $\mathcal{B}[A(v)]_{\bar{x}} = P_n\mathcal{B}[A'(v)]_{\bar{u}w}t_{\bar{y}}$. Here $t_{\bar{y}}$ means that if $t = [B]_{\bar{z}}$ then
$t_{\bar{y}} =  [B]_{\bar{z}\bar{y}}$ or else $t$ is a variable and $\bar{y}$ is the empty sequence.
Let the argument in $A(v)$ which $w$ replaced be $t$. 
Here $\bar{x} = \bar{x}'\bar{x}''\bar{y}$ is normalized except that the sequence $\bar{y}$ of variables
in $\bar{x}$ which are free in $t$ are placed at the end. All arguments before $t$ are variables in $\bar{x}$. So $v$ may occur  in $t$ and in arguments after $t$ and $v$ may be reflected. Hence
we will have (if $v$ is reflected)

\[[A(v)]_{\bar{x'}\bar{x''}\bar{y}v} = \Sigma_1R\Sigma R...R\Sigma_k \Sigma [A^{(k)}(v)]_{\bar{x}'\bar{p} \bar{y}'} = \overline{\Sigma R} \Sigma [A^{(k)}(v)]_{\bar{x'}\bar{p} \bar{y'}}  \]

Here $\bar{x'}\bar{q}$ is the normalized permutation of $\bar{x}v$ and the $k$ new variables $w_k$ but with the sequence $\bar{y}'$ of variables of $\bar{q}$ which occur  free in $t$ placed at the end. $\bar{y}'$
is either equal to $\bar{y}$ or contains in addition $v$. Notice how $\bar{x'}$ is not affected by the permutations.
For clarity, we wrote the last permutation as a composition. If $v$ is not reflected then we have
only $\Sigma$ which puts $v$ in its proper place (which is either in $\bar{y}$ or in $\bar{p}$).

We have that

\[ (x'_1,...,x'_a,x''_1,...,x''_b,y_1,...,y_c, v ) \in \mathcal{H} \overline{s r} s D[A^{(k)}(v)]_{\bar{x'}\bar{p} \bar{y}'}   \]

iff

\[ (x'_1,...,x'_a,p_1,...,p_{b+k},y'_1,...,y'_{c+j}) \in \mathcal{H}  D[A^{(k)}(v)]_{\bar{x'}\bar{p} \bar{y}'}   \]

where $j$ is either $0$ and $v$ is in $\bar{p}$ or else is $1$ and $v$ is in $\bar{y}'$.
Here the $(p_1,...,p_{b+k})$  is  permutation of the $x''_i$ and $w_i$ (and possibly $v$) corresponding to the normalized sequence of these variables and which has $v$ for $w_i$.

W.l.o.g let $v$ be in $\bar{p}$ and $t$. Fix a $\mathcal{H}$. Then

\[   (x'_1,...,x'_a,x''_1,...,x''_b,y_1,...,y_c ) \in \mathcal{H}D[A(v)]_{\bar{x}'\bar{x}''\bar{y}} =  \mathcal{H}P_nD[A'_1(v)]_{\bar{x'}\bar{x}''w }Dt_{\bar{y}}
\text{ iff }    \]
\[   (x'_1,...,x'_a,x''_1,...,x''_b, p^c(Dt_{\bar{y }},y_1,...y_1) ) \in   \mathcal{H}D[A'_1(v)]_{\bar{x}\bar{x}'w } = \mathcal{H}p_0D[A'_1(v)]_{\bar{x}'\bar{x}''wv}\mathcal{A}(v)\]
by induction. This is equivalent to
\[   (x'_1,...,x'_a,x''_1,...,x''_b, p^c(Dt_{\bar{y }},y_1,...y_1),\mathcal{A}(v)) ) \in   \mathcal{H}D[A'_1(v)]_{\bar{x}'\bar{x}''wv}\]

Let the Bealer decomposition of $[A'_1(v)]_{\bar{x}'\bar{x}''wv}$
be 
\[
\Sigma'_1R\Sigma'_2 R...R\Sigma'_{k-1} \Sigma' [A'^{(k-1)}(v)]_{\bar{x}'\bar{q}} = \overline{\Sigma' R} \Sigma' [A'^{(k)}(v)]_{\bar{x}'\bar{q} }\]

where $\bar{q}$ consists of variables of $\bar{x}''$, $w$, $v$ and $k$ new variables $w_k$.

The previous condition is equivalent to

\[   (x'_1,...,x'_a,x''_1,...,x''_b, p^c(Dt_{\bar{y }},y_1,...y_1),\mathcal{A}(v)) ) \in   \mathcal{H} \overline{s'R} s' D[A'^{(k-1)}(v)]_{\bar{x}'\bar{q} }\]

which is equivalent to
\[ (x'_1,...,x'_a, r_1,....,r_s ) \in   \mathcal{H}D[A'^{(k-1)}(v)]_{\bar{x}'\bar{q} } \tag{**}\]

where $(r_1,...,r_s)$ corresponds to the sequence $\bar{q}$ with $v$ and $w_k$ corresponding to $\mathcal{A}(v)$ and
$w$ corresponding to  $p^c(Dt_{\bar{y }},y_1,...y_1)$.

Now consider

\[   (x'_1,...,x'_a,x''_1,...,x''_b,y_1,...,y_c ) \in \mathcal{H}p_0D[A(v)]_{\bar{x}\bar{x}'\bar{y}v}\mathcal{A}(v) \]

this is equivalent to

\[   (x'_1,...,x'_a,x''_1,...,x''_b,y_1,...,y_c ,\mathcal{A}(v)) \in \mathcal{H}D[A(v)]_{\bar{x}\bar{x}'\bar{y}v} = \mathcal{H} \overline{s r} s D[A^{(k)}(v)]_{\bar{x'}\bar{p} \bar{y'}}   \]

where $\bar{y'} = \bar{y''}v\bar{y'''}$.
This is turn is equivalent to

\[   (x'_1,...,x'_a,x''_1,...,x''_b,y_1,...,y_c ,\mathcal{A}(v)) \in \mathcal{H} \overline{s r} s P_{n+1}D[A^{(k)'}(v)]_{\bar{x'}\bar{p} w}t_{\bar{y'}}   \]

that is,

\[   (x'_1,...,x'_a,e_1,...,e_{b+k} ,y'_1,...,y'_{c+1}) \in \mathcal{H} P_{n+1}D[A^{(k)'}(v)]_{\bar{x'}\bar{p} w}t_{\bar{y'}}   \]
where $(e_1,...,e_{b+k+1})$ corresponds to the sequence $\bar{p}$ and has $\mathcal{A}(v)$ corresponding to $w_k$  and $(y'_1,...,y'_{c+1})$ corresponds to $\bar{y'} = \bar{y''}v\bar{y'''}$ with
$\mathcal{A}(v)$ corresponding to $v$. This in turn is the same as

\[   (x'_1,...,x'_a,e_1,...,e_{b+k}, p^{c+1}_0(Dt_{\bar{y'}},y'_1,...,y'_{c+1}) ) \in \mathcal{H}D[A^{(k)'}(v)]_{\bar{x'}\bar{p} w}   \]

It is easy to check that by induction we have

\[ p^{c+1}_0(Dt_{\bar{y'}},y'_1,...,y'_{c+1}) ) = p_0p^c_0( Dt_{\bar{y}v},y_1,...,y_c))\mathcal{A}(v)  = p^c_0( Dt_{\bar{y}},y_1,...,y_c) \]

Inspecting (**) , noticing that $[A^{(k)'}(v)]_{\bar{x'}\bar{p} w}  = [A'^{(k)}(v)]_{\bar{x'}\bar{p} w} $ (modulo renaming bound variables) and applying a permutation we get as desired.

\end{proof}

\begin{lemma} $T_{\mathcal{I}\mathcal{A}\mathcal{M}}(\exists v. A)$ iff there
	is an assignment $\mathcal{A}'$ like $\mathcal{A}$ except perhaps for what it assigns to $v$ and
	such that $T_{\mathcal{I}\mathcal{A'}\mathcal{M}}(A(v))$.
\end{lemma}
\begin{proof}
	Assume w.l.o.g that $v$ occurs free in $A$.
	We have  $T_{\mathcal{I}\mathcal{A}\mathcal{M}}(\exists v. A)$ iff 
	$\mathcal{G}D([\exists v. A]) = T$ iff 
	$\mathcal{G} uD[A]_{v} = T$.
	The last condition is equivalent to the existence of a $x\in\mathcal{D}$ such that 
	\[ x\in \mathcal{G} D[A(v)]_{v}  \]
	Let $\mathcal{A}'$ be like $\mathcal{A}$ but with $\mathcal{A}'(v) =x$.
	We must show that $\mathcal{G}D_{\mathcal{A'}}[A(v)] = T$.
	Notice that $D_{\mathcal{A'}}[A(v)]_v = D[A(v)]_v$.
	By the proof of Bealer's lemma (note that we are not using the type 1 condition) we have that:

	\[ \mathcal{G}D_{\mathcal{A'}}[A(v)] = T \text{ iff } \mathcal{G}p_0  D[A(v)]_v\mathcal{A'}(v) = T  \]

	Hence we must show that $\mathcal{G}p_0  D[A(v)]_v \mathcal{A'}(v) = T$.  But this is equivalent to $\mathcal{A}'(v) \in  \mathcal{G} D[A(v)]_{v} $ and the result follows since $\mathcal{A}'(v) = x$. The other direction is similar.

\end{proof}	
	
Hence if  $T_{\mathcal{I}\mathcal{A}\mathcal{M}}(\forall v. A(v))$ then for any assignment
$\mathcal{A}'$ which only differs at most on $v$ we have that $T_{\mathcal{I}\mathcal{A'}\mathcal{M}}(A(v))$. Hence lemma 5.3 yields:

\begin{lemma} For any models we have (Inst)  $\vDash \forall v. A(v) \rightarrow A(t)$ where $t$ is free for $v$ in $A$.

\end{lemma}

Likewise we have

\begin{lemma}
	For any models we have (QImp)  $\vDash \forall v. (A\rightarrow B(v)) \rightarrow (A \rightarrow \forall v. B(v))$  where $v$ does not occur free in  $A$.
\end{lemma}
	
\begin{proof}
 Assume $T_{\mathcal{I}\mathcal{A}\mathcal{M}}(\forall v. A( \rightarrow B(v))$ . Then for
 all assignment $\mathcal{A}'$ differing from $\mathcal{A}$ at most on $v$ we have that
 
 \[T_{\mathcal{I}\mathcal{A}'\mathcal{M}}(A \rightarrow B(v)) \tag{*}\]
 
 We must show that $T_{\mathcal{I}\mathcal{A}\mathcal{M}}(A \rightarrow \forall v.  B(v))$.
 Assume then that $T_{\mathcal{I}\mathcal{A}\mathcal{M}}(A)$. We must show that
 $T_{\mathcal{I}\mathcal{A}\mathcal{M}}(A \rightarrow \forall v.  B(v))$.
 Take any assignment $\mathcal{A}'$ differing from $\mathcal{A}$ at most on $v$.
  Since $v$ does not occur in $A$ we have that $T_{\mathcal{I}\mathcal{A}'\mathcal{M}}(A)$.
  Hence by (*) we get that  $T_{\mathcal{I}\mathcal{A}'\mathcal{M}}(\forall v. A(v))$.
  Since this holds for any $\mathcal{A}'$  we get the conclusion by lemma 5.3.
 
\end{proof}	
	
Similarly we get the soundness of the (Gen) rule:

\begin{lemma}
	For any models if $\vDash A(v)$ then $\vDash \forall v.A(v)$.
\end{lemma}	
	
	\begin{lemma}
		For all $\mathcal{M}$ of type 1 and $\mathcal{I}$, $\mathcal{A}$ and terms $t,t'$ we have
		
		\[D([t = t]) =  D([t' = t']) \] 
	\end{lemma}
	
	\begin{proof}
		Suppose that 
		\[D([t = t]) \neq  D([t' = t']) \] 
		
		Then the type 1 condition implies that there is a $\mathcal{H}$ such that
		
		\[\mathcal{H}D([t = t]) \neq  \mathcal{H}D([t' = t']) \] 
		
		But \[ [t=t] = P_0(P_0([v=w]_{vw},t )    ,t)  \]
		Hence applying the definition of $\mathcal{H}$ yields
		
		\[ \mathcal{H}D([t = t]) = T \leftrightarrow
		D(t) =  D(t)\]
		hence we have $\mathcal{H}D([t = t]) = T$ and also analogously
		$\mathcal{H}D([t' = t']) = T$ and so we obtain a contradiction.

	\end{proof}

	\begin{lemma}[\cite{QC}A8*(a)]
		For type 1 models we have (B1) $ \vDash \square (A\leftrightarrow B) \leftrightarrow [A]=[B] $.
	\end{lemma}
	
	\begin{proof}
		We must show that $\mathcal{G}D_{[\mathcal{I}\mathcal{A}\mathcal{M}}(\square (A\leftrightarrow B) \leftrightarrow [A]=[B]] ) = T$,
		which is equivalent ot showing that 
		\[ \mathcal{G}D_{\mathcal{I}\mathcal{A}\mathcal{M}}([\square (A\leftrightarrow B)]) = T 
		\text{ iff } \mathcal{G}D_{\mathcal{I}\mathcal{A}\mathcal{M}}([[A] = [B]]) = T \]
		that is,
		\[ \mathcal{G}D_{\mathcal{I}\mathcal{A}\mathcal{M}}([[A\leftrightarrow B] = [[A=B]=[A = B] ]] ) = T 
		\text{ iff } \mathcal{G}D_{\mathcal{I}\mathcal{A}\mathcal{M}}([[A] = [B]]) = T \]
		
		which is equivalent to
		
		\[ D_{\mathcal{I}\mathcal{A}\mathcal{M}}([A\leftrightarrow B]) =  D_{\mathcal{I}\mathcal{A}\mathcal{M}}([[A=B]=[A = B] ] )  
		\text{ iff }D_{\mathcal{I}\mathcal{A}\mathcal{M}}([[A])  =  D_{\mathcal{I}\mathcal{A}\mathcal{M}} ([B]) \]
		
	\end{proof}
	
	\begin{lemma}[\cite{QC} A8*(b)] For type 1 models we have (B2) $\vDash \forall v. [A(v)]_{\alpha} = [B(v)]_{\alpha} \leftrightarrow [A(v)]_{\alpha v} = [B(v)]_{\alpha v}$.
	\end{lemma}
	
	\begin{proof}
		We have
		\[D_{\mathcal{I}\mathcal{A}\mathcal{M}}[A(v)]_{\alpha} = p_0(D_{\mathcal{I}\mathcal{A}\mathcal{M}}[A(v)]_{\alpha v},
		\mathcal{A}(v)) \]
		and
		\[D_{\mathcal{I}\mathcal{A}\mathcal{M}} [B(v)]_{\alpha} = p_0(D_{\mathcal{I}\mathcal{A}\mathcal{M}}[B(v)]_{\alpha v},
		\mathcal{A}(v)) \]
		
		Consider all assignments $\mathcal{A'}$ like $\mathcal{A}$ except for the assignment on $v$. Then $D_{\mathcal{I}\mathcal{A}\mathcal{M}}[A(v)]_{\alpha v}
		=D_{\mathcal{I}\mathcal{A'}\mathcal{M}}[A(v)]_{\alpha v}$ and $D_{\mathcal{I}\mathcal{A}\mathcal{M}}[B(v)]_{\alpha v}
		=D_{\mathcal{I}\mathcal{A'}\mathcal{M}}[B(v)]_{\alpha v}$. So if $D_{\mathcal{I}\mathcal{A}\mathcal{M}}[A(v)]_{\alpha v}= 
		D_{\mathcal{I}\mathcal{A}\mathcal{M}}[B(v)]_{\alpha v}$ the for any $\mathcal{A'}$ we have 
		$D_{\mathcal{I}\mathcal{A'}\mathcal{M}}[A(v)]_{\alpha}= D_{\mathcal{I}\mathcal{A'}\mathcal{M}}[B(v)]_{\alpha}$
		and one direction follows.
		
		We now need to show that if for all such $\mathcal{A'}$ we have
		
		\[D_{\mathcal{I}\mathcal{A'}\mathcal{M}}[A(v)]_{\alpha} = 
		D_{\mathcal{I}\mathcal{A'}\mathcal{M}}[B(v)]_{\alpha}
		\]
		
		then $D_{\mathcal{I}\mathcal{A}\mathcal{M}}[A(v)]_{\alpha v}= 
		D_{\mathcal{I}\mathcal{A}\mathcal{M}}[B(v)]_{\alpha v}$.
		But if $D_{\mathcal{I}\mathcal{A}\mathcal{M}}[A(v)]_{\alpha v}\neq 
		D_{\mathcal{I}\mathcal{A}\mathcal{M}}[B(v)]_{\alpha v}$ then by the type 1 condition there would be a
		$\mathcal{H}$ and a $(x_1,...,x_m)$ such that $(x_1,....,x_m) \in \mathcal{H}D_{\mathcal{I}\mathcal{A}\mathcal{M}}[A(v)]_{\alpha v}$
		but $(x_1,....,x_m) \notin \mathcal{H}D_{\mathcal{I}\mathcal{A}\mathcal{M}}[B(v)]_{\alpha v}$.
		But taking $\mathcal{A}'(v) = x_n$ this means that 
		\[(x_1,....,x_{m-1}) \in \mathcal{H}p_0(D_{\mathcal{I}\mathcal{A}\mathcal{M}}[A(v)]_{\alpha v}, \mathcal{A'}(v)) = 
		\mathcal{H}D_{\mathcal{I}\mathcal{A'}\mathcal{M}}[A(v)]_{\alpha}\]
		
		but 
		\[(x_1,....,x_{m-1}) \notin \mathcal{H}p_0(D_{\mathcal{I}\mathcal{A}\mathcal{M}}[B(v)]_{\alpha v}, \mathcal{A'}(v))= 
		\mathcal{H}D_{\mathcal{I}\mathcal{A'}\mathcal{M}}[B(v)]_{\alpha}\]
		
		and we obtain a contradiction.
		
	\end{proof}
	
	\begin{lemma}[\cite{QC} A11*]
		In the same conditions we have (S5') $\vDash v_i \neq v_j \rightarrow \square v_i\neq v_j$.
	\end{lemma}
	
	\begin{proof}
		We must show that if $T_{\mathcal{I}\mathcal{A}\mathcal{M}}(v_i \neq v_j)$ then $T_{\mathcal{I}\mathcal{A}\mathcal{M}}( \square v_i\neq v_j )$.
		
		But if $T_{\mathcal{I}\mathcal{A}\mathcal{M}}(v_i \neq v_j)$  then $D_{\mathcal{I}\mathcal{A}\mathcal{M}}(v_i) \neq D_{\mathcal{I}\mathcal{A}\mathcal{M}}(v_k)$.
		 But this is equivalent to $\mathcal{A}(v_i) \neq \mathcal{A}(v_k)$. This means that for all $\mathcal{H}\in H$ we
		have $\mathcal{H}([v_i\neq v_j]) =T$ and the conclusion follows.
	\end{proof}
	
	\begin{lemma}[\cite{QC} A9]
		In the same conditions we have (T) $\vDash \square A \rightarrow A$ .
	
\end{lemma}
	\begin{proof}
		If $T_{\mathcal{I}\mathcal{A}\mathcal{M}}(\square A)$ then $\forall \mathcal{H}\in H$ we have
		$\mathcal{H}D_{\mathcal{I}\mathcal{A}\mathcal{M}}([A]) = T$ and so in particular $T_{\mathcal{I}\mathcal{A}\mathcal{M}}(A)$
		
	\end{proof}
	
	\begin{lemma}[\cite{QC} A10]
		In the same conditions we have (K)  $\vDash
		\square(A \rightarrow B) \rightarrow (\square A \rightarrow \square B)$.
	\end{lemma}
	
	\begin{proof}
		Assume $T_{\mathcal{I}\mathcal{A}\mathcal{M}}(\square(A \rightarrow B)$.
		Then $\forall \mathcal{H}\in H$ we have $\mathcal{H}D_{\mathcal{I}\mathcal{A}\mathcal{M}}[A \rightarrow B] = T$
		which means that if $\mathcal{H}D_{\mathcal{I}\mathcal{A}\mathcal{M}}([A])= T$ then $\mathcal{H}D_{\mathcal{I}\mathcal{A}\mathcal{M}}([B])$.
		To show that $T_{\mathcal{I}\mathcal{A}\mathcal{M}}(\square A \rightarrow \square B)$ consider any
		$\mathcal{K}\in H$. Assume that $\forall \mathcal{H} \in H$ we have $\mathcal{H}D_{\mathcal{I}\mathcal{A}\mathcal{M}}([A]) = T$.
		Then  $\mathcal{K}D_{\mathcal{I}\mathcal{A}\mathcal{M}}([A]) = T$ and hence $\mathcal{K}D_{\mathcal{I}\mathcal{A}\mathcal{M}}([B]) = T$.
		Hence we have $\forall \mathcal{H} \in H$ we have $\mathcal{H}D_{\mathcal{I}\mathcal{A}\mathcal{M}}([B]) = T$ and
		thus $T_{\mathcal{I}\mathcal{A}\mathcal{M}}(\square A \rightarrow \square B)$. 
		
	\end{proof}

	\section{Soundness of T2}
	
	We have already proven the soundness of  the standard first-order logical axioms and rules
	for any model. Also the soundess of axioms 6 and 7.  Axioms 9 to 11 are clearly sound by the type 2 model condition.

	\begin{lemma} TGen is sound:  suppose that $F$ does not occur in $A(v)$.
		If $\vdash A([F(x_1,...,x_n)]_{x_1...x_n})$ then we can infer
		$\vdash  A(t)$ whenever $t$ complex, of arity $n$ and is free for $v$ in $A$.
	\end{lemma}

	\begin{proof}
		By (L) we have
		
		\[ \vdash  [F(x_1,...,x_n)]_{x_1...x_n} = t \rightarrow A([F(x_1,...,x_n)]_{x_1...x_n}) \rightarrow A(t) \]
		
		from which it follows that
		
 	\[ \vdash A([F(x_1,...,x_n)]_{x_1...x_n})  \rightarrow   [F(x_1,...,x_n)]_{x_1...x_n} = t \rightarrow A(t) \]
		
	Assume $\vDash A([F(x_1,...,x_n)]_{x_1...x_n})$. Then by soundness of (L), (MP) and tautologies we get
	
			\[ \vDash   [F(x_1,...,x_n)]_{x_1...x_n} = t \rightarrow A(t) \]
		
		By the non-cycling condition for T2 models $F$ does not occur in $t$ and hence the interpretation of $F$ does not
		affect the denotation of $Dt$. But choose an interpretation $\mathcal{I}$ assigning to $[F(x_1,...,x_n)]_{x_1...x_n}$ the value $Dt$.
		In this case we have $\vDash [F(x_1,...,x_n)]_{x_1...x_n} = t$ and hence $T_{\mathcal{I}}(A(t))$. But since $D[A(t)]$ cannot depend on the interpretation
		is we must have $\vDash  A(t)$.

	\end{proof}
	
	\begin{lemma}
	We have that $[A]_{\bar{x}} = [B]_{\bar{x}} \rightarrow A\leftrightarrow B$ is sound.
	\end{lemma}
	\begin{proof}
		For any T2 model, interpretation and assignment we must show that $T( [A]_{\bar{x}} = [B]_{\bar{x}} \rightarrow A\leftrightarrow B)$.
		We must show that if $T( [A]_{\bar{x}} = [B]_{\bar{x}})$ then $T( A\leftrightarrow B)$.
		Now if $T( [A]_{\bar{x}} = [B]_{\bar{x}})$ then we have $D[A]_{\bar{x}} = D[B]_{\bar{x}}$.
		We now show that $T( A\leftrightarrow B)$. We show only one implication, the other is similar.
    We need only show that    $T( \forall \bar{x}. A\rightarrow B)$, that is, $\mathcal{G}D[  \forall \bar{x}. A\rightarrow B] = T$.
	Now  
	\[D[  \forall \bar{x}. A\rightarrow B] = D[  \neg \exists \bar{x}. \neg ( A\rightarrow B) ]  = D[  \neg \exists \bar{x}.  A\&  \neg B ] \]
	\[= ne.....eD[ A\&  \neg B ]_{\bar{x}}  = ue...ekD[A]_{\bar{x}}nD[B]_{\bar{x}} = ue...ekD[A]_{\bar{x}}nD[A]_{\bar{x}} \]
	
	So $\emptyset \in \mathcal{H}ue...ekD[A]_{\bar{x}}nD[A]_{\bar{x}}$ iff there is no $(x_1,....,x_n)$ such that $(x_1,...,x_n)\in \mathcal{H}D[A]_{\bar{x}}$ and $(x_1,...,x_n)\notin \mathcal{H}D[A]_{\bar{x}}$. Hence $\mathcal{G}D[  \forall \bar{x}. A\rightarrow B] = T$.
	
	\end{proof}
	Note that this rule is sound for any model.

\section{Conclusion}

\end{document}